\documentclass[11pt,reqno]{amsart}

\usepackage[T1]{fontenc}
\usepackage{amsmath,amssymb,amsthm}
\usepackage{xcolor}
\usepackage[a4paper,margin=30mm]{geometry}
\usepackage[hidelinks]{hyperref}

\newtheorem{theorem}{Theorem}[section]
\newtheorem*{maintheorem}{Main Theorem}
\newtheorem*{question}{Question}
\newtheorem{proposition}[theorem]{Proposition}
\newtheorem{lemma}[theorem]{Lemma}
\newtheorem{corollary}[theorem]{Corollary}
\theoremstyle{definition}
\newtheorem{definition}[theorem]{Definition}
\theoremstyle{remark}
\newtheorem{remark}[theorem]{Remark}

\numberwithin{equation}{section}
\newcommand{\N}{\mathbb{N}}
\newcommand{\R}{\mathbb{R}}
\newcommand{\C}{\mathbb{C}}
\newcommand{\CP}{\mathbb{CP}}
\newcommand{\Sphere}{\mathbb{S}}
\newcommand{\dd}{\mathrm{d}}
\DeclareMathOperator{\SL}{SL}
\DeclareMathOperator{\SU}{SU}

\DeclareMathOperator{\End}{End}
\DeclareMathOperator{\ord}{ord}

\title[Non-algebraicity of Lawson surfaces]
{Non-algebraicity of the Lawson surfaces $\xi_{k,l}$}

\author[L. Heller]{Lynn Heller}
\address{Hetao Institute of Mathematics and Interdisciplinary Sciences, Shenzhen, China}
\email{lynn@himis-sz.cn}
\author[S. Heller]{Sebastian Heller}
\address{Department of Mathematics and Institute of Mathematical Sciences,
The Chinese University of Hong Kong}
\email{sheller@cuhk.edu.hk}

\date{}

\begin{document}

\setlength{\parindent}{0pt}
\setlength{\parskip}{\medskipamount}

\begin{abstract}
We prove that every Lawson surface $\xi_{k,l}\subset\Sphere^3$
of genus $kl>1$ is non-algebraic: any real polynomial vanishing
on $\xi_{k,l}$ vanishes identically on $\Sphere^3$.
This answers a question posed by Hsiang in 1967 and proves
Lawson's 1970 conjecture for $\xi_{2,2}$.
The proof uses a reflection principle for the monodromy of the
associated family of flat connections.
\end{abstract}

\maketitle

\section{Introduction}\label{sec:introduction}
Minimal surfaces in space forms are classical objects of differential
geometry. In the round $3$-sphere $\Sphere^3$, the simplest closed
examples are the totally geodesic sphere and the Clifford torus,
both of which are defined within $\Sphere^3$ by polynomial equations.
These are also the only closed embedded minimal surfaces of genus
zero or one: in genus zero, the Hopf differential vanishes, so the surface
is totally geodesic~\cite{Almgren}; in genus one,
Brendle~\cite{Brendle} resolved the Lawson conjecture by proving
that every embedded minimal torus is congruent to the Clifford torus
$\xi_{1,1}$.

In 1967, Hsiang~\cite[Problem~3, p.~263]{Hsiang} asked whether
spheres also admit closed embedded minimal hypersurfaces that are
not algebraic. The Lawson surfaces $\xi_{k,l}$, with integers
$k\geq l\geq1$, provide a concrete family to study this question.
Lawson~\cite{Lawson} constructed them in 1970 by analytic
continuation of solutions to Plateau problems with geodesic
quadrilateral boundaries. These were the first closed embedded
minimal surfaces in $\Sphere^3$ of genus greater than one.
The implicit nature of this construction leaves their algebraicity
unresolved; Lawson conjectured that $\xi_{2,2}$ is
non-algebraic~\cite[\S6, p.~350]{Lawson}.

In a 2021 MathOverflow discussion, Bryant~\cite{Bryant} described
non-algebraic immersed minimal tori in $\Sphere^3$ that are invariant
under a circle action. These examples are not embedded. In this
paper we prove that no Lawson surface of genus greater than one
is algebraic. This settles Lawson's conjecture for $\xi_{2,2}$
and answers Hsiang's question with embedded examples in $\Sphere^3$.

\begin{maintheorem}\hypertarget{thm:main}{}
For all integers $k\geq l\geq1$ with $kl>1$, the Lawson surface
$\xi_{k,l}$ is not algebraic. More precisely, if
$P\in\R[X_1,X_2,X_3,X_4]$ vanishes on $\xi_{k,l}$, then there
exists $R\in\R[X_1,X_2,X_3,X_4]$ such that
\[
 P=(X_1^2+X_2^2+X_3^2+X_4^2-1)R.
\]
\end{maintheorem}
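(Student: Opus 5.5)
We argue by contradiction. Suppose some $P\in\R[X_1,\dots,X_4]$ vanishes on $\xi_{k,l}$ but is not divisible by $X_1^2+X_2^2+X_3^2+X_4^2-1$.

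\textbf{Step 1: reduction to an algebraic surface containing $\xi_{k,l}$.} Since the quadric is irreducible, $P$ cuts $\Sphere^3$ in a real algebraic set $Z$ of dimension at most two, and $Z$ contains the real-analytic surface $\xi_{k,l}$. We pass to the complexification. The complexified sphere $Q=\{\sum X_i^2=1\}\subset\C^4$ is identified with $\SL(2,\C)$. The Zariski closure of $\xi_{k,l}$ is then a complex algebraic surface $S\subset \SL(2,\C)$, and the conformal immersion $f$ of the compact Riemann surface $\Sigma$ extends holomorphically into $S$ in the complexified sense.

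By analytic continuation, $S$ contains every surface obtained from $\xi_{k,l}$ by continuing the minimal-surface equation, since it is defined by polynomial equations. In particular $S$ contains the complexified Lawson surface.

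\textbf{Step 2: associated family.} Recall that a minimal surface in $\Sphere^3\cong\SU(2)$ is described by its associated family of flat $\SL(2,\C)$-connections $\nabla^\lambda$ on $\Sigma$, with $\lambda\in\C^*$. Under the spectral parameter, the surface is recovered as $f=\Phi_{\lambda_1}\Phi_{\lambda_2}^{-1}$ from parallel frames evaluated at two Sym points on the unit circle.

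The symmetries of $\xi_{k,l}$ are generated by reflections across great spheres and by $\pi$-rotations about the geodesic boundary edges. These give reflection identities for the monodromy representation $\rho_\lambda$. For the generating quadrilateral, the monodromies along the edges are fixed, and the remaining data, namely the $\lambda$-dependence of the frame, is governed by holonomy around closed loops.

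My plan is to show that algebraicity forces strong finiteness of this data. Consider the polar rotations of the edges, which are given by the reflection principle. Continuing the surface analytically through these symmetries, one obtains the full orbit of $\xi_{k,l}$ under the group generated by the complexified reflections. If this group were infinite (that is, non-discrete in $\SL(2,\C)\times\SL(2,\C)$), the orbit would be Zariski dense in $Q$. That would contradict $\dim S=2$.

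\textbf{Step 3: the key, and the main obstacle.} For real reflections the group is finite, so the contradiction must instead come from analytic continuation in the complex direction. Here I would use the reflection principle for monodromy with the spectral parameter $\lambda$ continued off the unit circle.

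Algebraicity of $S$ should imply that the extended frame, viewed as a function of $\lambda$, has algebraic behaviour. This would force the monodromy $\rho_\lambda$ to be of finite order, or unitarizable, on a set of $\lambda$ with an accumulation point, hence for all $\lambda$. That in turn would make the associated family trivial and the Hopf differential zero, contradicting genus $kl>1$.

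Making precise how the polynomial $P$ constrains $\rho_\lambda$ for $\lambda\notin\Sphere^1$ is the hard part. I would first try evaluating $P$ along the continued surfaces $f_\lambda$ for real $\lambda$ and extracting an identity in the trace coordinates of $\rho_\lambda$. I would then use that these traces are transcendental functions of $\lambda$, with essential growth at $\lambda=0$ coming from the Hopf differential, to contradict polynomial dependence.
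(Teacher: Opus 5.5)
Your proposal does not contain a proof. The step you call the hard part is where the whole argument lives, and the route you suggest for it cannot work.

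First, $P$ only constrains the surface reconstructed at the Sym points. For other $\lambda$ the Sym formula produces different maps (for $\lambda\notin S^1$ they are not even $\Sphere^3$-valued), and $P$ has no reason to vanish on them. So evaluating $P$ along $f_\lambda$ yields no identity for the traces of $\rho_\lambda$.

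Second, passing to the Zariski closure in $\SL(2,\C)$ merely restates the hypothesis. The orbit argument in Step~2 is empty because, as you note, the symmetry group is finite.

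Third, the chain of implications you aim for refutes itself. For every compact minimal surface the monodromy is already unitary on all of $S^1$, a set with accumulation points. If ``unitarizable on a set with an accumulation point'' propagated to $Q=0$, no compact minimal surface with $Q\neq0$ would exist. The rigidity input that is actually available is Heller's theorem: on a compact minimal surface of genus at least two, the associated family has nonabelian monodromy for generic $\lambda$, in particular at some $\lambda_0\in S^1$. You would need a way to bring this to bear on the quotient $\mathcal S_4$.

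The missing idea is to turn the polynomial identity into a finiteness statement for analytic continuation in the \emph{surface} variable $z$ on $\mathcal S_4$, valid for all $\lambda$ at once. The paper does this in the following steps.

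\begin{enumerate}
\item It uses the Bj\"orling data of a geodesic edge $E$. The circle coordinates $C,N$ of the point and unit normal along $E$ continue meromorphically to the universal cover of $\mathcal S_4$, via the Fuchsian potential and the DMPW factorization. They satisfy $\dd\log C\,\dd\log N=2iQ$.
\item Algebraicity of the surface gives $A(C,N)=0$. After stereographic projection the surface is locally a graph over a line, and its normal along that line is algebraic.
\item The bridge to monodromy is a reflection principle: if continuation along $\gamma$ fixes $(C,N)$, then $M^\lambda_\gamma=M^\lambda_{\delta\gamma}$ for all $\lambda$. It is proved from a Bj\"orling-type normal form of the potential and a rigidity lemma at the umbilic. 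It also pins down the exact branching order $n_1$ of $(C,N)$ at $p_1$.
\item Comparison with the elliptic integral of $\dd z/\sqrt{p(z)}$ then upgrades $A(C,N)=0$ to algebraicity of $C,N$ over $\C(z)$. Hence $(C,N)$ has finitely many branches.
\item This gives a $\delta$-invariant normal subgroup $H$ of finite index containing all $\gamma_j^{n_j}$. It therefore defines a finite unbranched cover of $\Sigma$, on which Heller's theorem gives nonabelian monodromy at some $\lambda_0\in S^1$.
\item The reflection identity makes $M_{\delta\gamma}^{-1}M_\gamma$ commute with this nonabelian subgroup of $\SU(2)$, so it equals $\pm\mathrm{Id}$. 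Taking $\gamma=\gamma_1$ with $\delta\gamma_1=\gamma_1^{-1}$ gives $(M_{\gamma_1}^{\lambda_0})^2=\pm\mathrm{Id}$. This contradicts the eigenvalues $e^{\pm\pi i/n_1}$, since $n_1>2$.
\end{enumerate}

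None of these ingredients appears in your plan: the edge functions, their algebraic relation, the reflection identity, and the finite cover.
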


Previous work has established degree bounds and classification
results for minimal surfaces defined within $\Sphere^3$ as the
zero set of a homogeneous polynomial.
Perdomo and Mesa~\cite{PerdomoMesa}
obtained lower bounds on the degrees of homogeneous defining
polynomials for Lawson surfaces. They also showed that surfaces
$\xi_{k,l}$ with odd $k+l$ cannot be the entire zero set in
$\Sphere^3$ of a homogeneous polynomial, since they are not
invariant under the antipodal map~\cite[Resultado~4]{PerdomoMesa}.
For $\xi_{2,2}$, Castrill\'on and Mesa~\cite{CastrillonMesa} rule
out a homogeneous defining polynomial of degree five whose gradient
is nonzero along the surface. Up to congruence, Lawson's cubic
example is the only compact immersed minimal surface in $\Sphere^3$
defined by an irreducible homogeneous cubic polynomial~\cite{Perdomo}.
An alternative proof of this classification was given by
Wang~\cite{Wang}.

Our approach uses the integrable systems description of minimal
surfaces, in which the harmonic map equation is expressed as the
flatness of an associated family of $\SL(2,\C)$-connections
$\nabla^\lambda$ on the underlying Riemann surface, with
$\lambda\in\C^*$. The immersion is reconstructed from parallel frames at the Sym
points $\lambda=\pm1$; trivial monodromy at these parameters ensures
that it is globally well defined. For tori, this approach was
developed in~\cite{Hitchin,PinkallSterling,BobenkoTori}.
The difficulty in higher genus is that the associated family
$\nabla^\lambda$ has
nonabelian monodromy for generic $\lambda$~\cite{Heller}; abelian monodromy
for every $\lambda\in\C^*$ would force the immersion to factor through
a torus~\cite{Gerding}. For Lawson surfaces, symmetries allow us to reduce the associated
family to a four-punctured sphere
$\mathcal S_4=\CP^1\setminus\{p_1,\ldots,p_4\}$.
The Dorfmeister--Pedit--Wu (DPW) method reconstructs minimal surfaces
on simply connected domains from a meromorphic matrix-valued one-form
depending on $\lambda$, called a DPW potential~\cite{DPW}.
Subsequent work developed its application to higher-genus surfaces
by addressing the global monodromy problem.
For Lawson surfaces $\xi_{k,l}$, the associated families and DPW
potentials are studied using $(k,l)$-symmetry in~\cite{HHS,BHS}.
In the case of $\xi_{g,1}$, Fuchsian DPW potentials are constructed
in~\cite{HellerHeller}; in~\cite{CHHT}, these potentials are
obtained from convergent expansions for every $g>2$.
These descriptions yield explicit area expansions for
$\xi_{g,1}$~\cite{HellerHeller,CHHT} and lead to a proof that their
area is strictly increasing with the genus $g$~\cite{CHHT}.

By uniqueness for the Bj\"orling problem, a minimal surface is
determined near a regular analytic curve by the curve and its
prescribed real analytic unit normal field along this
curve~\cite[Theorem~3.1]{MercuriOnnis}.
For Lawson surfaces, the geodesic edges used in their construction
provide canonical curves.
By computations analogous to those in the boundary potential
construction for CMC surfaces in
$\R^3$~\cite[Theorems~3.1 and~4.1]{BranderDorfmeister},
we translate these data into an explicit local normal form for the
associated family. We use this normal form to relate algebraicity
to monodromy. Similar computations comparing potential and frame
monodromy were also performed in~\cite[Section~3.1]{DorfmeisterKilian}.

\noindent\textbf{Outline of the proof.}
Let $E$ be a geodesic edge of the fundamental quadrilateral of the
Lawson surface $\xi_{k,l}$, and let $C,N$ be functions encoding the point
and unit normal along $E$. We show that these functions admit analytic
continuation as meromorphic functions on the universal cover of the
four-punctured sphere $\mathcal S_4$.
The key observation is that algebraicity of the Lawson surface
would imply a polynomial relation between $C$ and $N$.

To make use of this polynomial relation, we derive a reflection
principle (Proposition~\ref{prop:reflection}) that links the analytic
continuation of $C,N$ to the monodromy of $\nabla^\lambda$ as follows.
Choose the basepoint $z_0$ on the image of the interior of $E$
in $\mathcal S_4$.
For $\gamma\in\pi_1(\mathcal S_4,z_0)$, let
$M_\gamma^\lambda$ be the monodromy matrix, and let $\delta$ be
the antiholomorphic involution of $\mathcal S_4$ induced by Schwarz
reflection across the chosen geodesic edge. If analytic continuation
along $\gamma$ leaves $C,N$ unchanged, then
\[
 M_\gamma^\lambda=M_{\delta\gamma}^\lambda
 \qquad (|\lambda|=1),
\]
where $\delta\gamma$ denotes the reflected loop. At a puncture
$p_1$ corresponding to an umbilic of the Lawson surface, this
identity determines the exact local branch order of $(C,N)$
(Corollary~\ref{cor:local-order}). Using this identity together with
the relation~\eqref{eq:differential-relation} between $C,N$ and the Hopf
differential, we show that a polynomial relation
between $C,N$ makes both functions algebraic over $\C(z)$, where
$z$ is an affine coordinate on $\CP^1$.
Their joint analytic continuation would therefore have
finitely many branches, and the local continuation orders give
a finite unbranched cover of the Lawson surface.
The reflection identity and the
nonabelian monodromy of the associated family on this
cover~\cite{Heller} would then force, for some $\lambda_0\in S^1$,
the monodromy matrix $M_{\gamma_1}^{\lambda_0}$ of a loop
$\gamma_1$ around $p_1$ to satisfy
\[
 (M_{\gamma_1}^{\lambda_0})^2=\pm\mathrm{Id},
\]
contradicting its prescribed eigenvalues.

\textbf{The paper is organized as follows.}
Section~\ref{sec:preliminaries} fixes the algebraicity convention
and recalls the Lawson surfaces and their integrable systems construction.
Section~\ref{sec:functions} treats the meromorphic functions $C,N$
and their algebraic relations. Section~\ref{sec:monodromy} contains
the reflection principle, the passage from algebraic dependence
to finite monodromy, and the proof of the \hyperlink{thm:main}{Main Theorem}.

\section{Preliminaries}\label{sec:preliminaries}

In this section, we recall the necessary background on Lawson
surfaces and the integrable systems approach to minimal surfaces,
and fix the notation.

\subsection{Algebraic surfaces in \texorpdfstring{$\Sphere^3$}{S3}}
\label{sec:algebraicity}

\begin{definition}\label{def:algebraic}
A surface $S\subset\Sphere^3$ is called \emph{algebraic} if there exists
a polynomial $P_S\in\R[X_1,X_2,X_3,X_4]$ which vanishes on $S$ but does
not vanish identically on $\Sphere^3$.
\end{definition}

\begin{remark}\label{rem:algebraicity}
\leavevmode
\begin{enumerate}
\renewcommand{\labelenumi}{\textup{(\arabic{enumi})}}
\item The inclusion
\[
 S\subset\{x\in\Sphere^3:P_S(x)=0\}
\]
need not be an equality.
\item Since $S\subset\Sphere^3$, the polynomial
\[
 q=X_1^2+X_2^2+X_3^2+X_4^2-1
\]
vanishes on $S$ automatically. The vanishing ideal of $\Sphere^3$
in $\R[X_1,X_2,X_3,X_4]$ is generated by $q$. Thus the condition
that $P_S$ does not vanish identically on $\Sphere^3$ is
equivalent to $P_S$ not being divisible by $q$.
\end{enumerate}
\end{remark}

Hsiang~\cite{Hsiang} and Lawson~\cite{Lawson} call a surface
algebraic if the cone over it is a real algebraic set.
The existence of a polynomial $P_S$ as in Definition~\ref{def:algebraic}
is equivalent to the existence of a nonzero homogeneous polynomial
vanishing on the cone over $S$. Indeed, set $d=\deg P_S$ and,
for $x=(X_1,X_2,X_3,X_4)$, decompose the even polynomial
$P_S(x)P_S(-x)$ into homogeneous components:
\[
 P_S(x)P_S(-x)=\sum_{j=0}^d R_{2j}(x),\qquad
 \rho(x)=X_1^2+X_2^2+X_3^2+X_4^2,
\]
where $R_{2j}$ is homogeneous of degree $2j$. Then
\[
 H(x)=\sum_{j=0}^d R_{2j}(x)\rho(x)^{d-j}
\]
is homogeneous of degree $2d$ and agrees with $P_S(x)P_S(-x)$ on
$\Sphere^3$. This product is not identically zero on the sphere,
since both factors are nonzero real analytic functions on the
connected manifold $\Sphere^3$. Thus $H$ is nonzero and vanishes on
the cone $\{tx:x\in S,\ t\geq0\}$. Conversely, a nonzero homogeneous
polynomial cannot vanish identically on $\Sphere^3$: homogeneity
would then make it vanish on every nonzero point of $\R^4$.

Our definition is therefore slightly more general than that of
Hsiang and Lawson: the cone over
$S$ need only be contained in the zero set of a nonzero homogeneous
polynomial, rather than itself being a real algebraic set.

\subsection{The Lawson surfaces \texorpdfstring{$\xi_{k,l}$}{xi(k,l)}}
\label{sec:lawson}

Let $k,l\in\N$ with $k\geq l\geq1$. Set $g=kl$ and assume $g>1$.
In $\Sphere^3\subset\C^2$, define the vertices
\[
 P_1=(1,0),\quad Q_1=(0,1),\quad
 P_2=(e^{\pi i/(k+1)},0),\quad Q_2=(0,e^{\pi i/(l+1)}).
\]
Consider the geodesic quadrilateral joining the vertices
$P_1,Q_1,P_2,Q_2$ in this order. In other words, its edges are the
shorter great-circle arcs from $P_1$ to $Q_1$, from $Q_1$ to $P_2$,
from $P_2$ to $Q_2$, and from $Q_2$ to $P_1$.
Each edge has length $\pi/2$.
A solution to the Plateau problem for this boundary curve is an
area-minimizing disk whose interior is an embedded minimal surface. By the Schwarz
reflection principle, the disk extends analytically across each
boundary edge by a rotation of angle $\pi$ about the great circle
containing that edge. Each reflected disk again has geodesic
boundary edges, so the process can be repeated. The reflections
generate a finite group, and Lawson proved that the resulting
compact surface is smooth across the edges and at the vertices
and gives a minimal embedding 
\[
 f_{k,l}\colon\Sigma_{k,l}\longrightarrow\Sphere^3
\]
of a compact Riemann surface $\Sigma_{k,l}$ of genus $g$~\cite{Lawson}.
We denote the image $f_{k,l}(\Sigma_{k,l})$, together with its
conformal parametrization $f_{k,l}$, by $\xi_{k,l}$.
We identify $\Sigma_{k,l}$ with its image under the embedding $f_{k,l}$.

The Lawson immersion $f_{k,l}$ is equivariant under
$\mathbb Z_{k+1}\times\mathbb Z_{l+1}$. The action on $\Sphere^3$
is generated by the commuting rotations
\[
 (z_1,z_2)\longmapsto(e^{2\pi i/(k+1)}z_1,z_2),\qquad
 (z_1,z_2)\longmapsto(z_1,e^{2\pi i/(l+1)}z_2).
\]
The action on $\Sigma_{k,l}$ is induced via the embedding $f_{k,l}$.
We use the quotient map
\begin{equation}\label{eq:lawson-quotient}
 \pi_{k,l}\colon\Sigma_{k,l}\longrightarrow\CP^1\cong
 \begin{cases}
 \Sigma_{k,l}/\mathbb Z_{k+1},&l=1,\\
 \Sigma_{k,l}/(\mathbb Z_{k+1}\times\mathbb Z_{l+1}),&l>1
 \end{cases}
\end{equation}
taking the cyclic quotient of~\cite{HellerHeller} when $l=1$.
Its degree is $k+1$ when $l=1$, and $(k+1)(l+1)$ otherwise.
In both cases there are four branch values
$p_1,p_2,p_3,p_4$ corresponding to the umbilics of $\xi_{k,l}$.
Set
\[
 \mathcal S_4=\CP^1\setminus\{p_1,p_2,p_3,p_4\},\qquad
 \Sigma_{k,l}^\circ=\pi_{k,l}^{-1}(\mathcal S_4).
\]
Since $k,l$ are fixed, we henceforth write $f,\Sigma,\Sigma^\circ$
and $\pi$ for $f_{k,l},\Sigma_{k,l},\Sigma_{k,l}^\circ$ and
$\pi_{k,l}$, respectively.
The branch values have real cross ratio~\cite{HellerHeller,HHS},
so they lie on a common circle in
$\CP^1$. Let $x_1\in\Sigma$ satisfy $f(x_1)=Q_1$ and set
$p_1=\pi(x_1)$. The stabilizer of $Q_1=(0,1)$ in the group
defining $\pi$ has order $k+1$, so $\pi$ has local degree $k+1$
at $x_1$. We label the remaining branch values in cyclic order
on this circle.
Let $n_j$ denote the local degree of $\pi$ at each point above
$p_j$: in suitable local coordinates, the map is
$w\mapsto w^{n_j}$. These local degrees are
\begin{equation}\label{eq:quotient-local-degrees}
 n_1=n_3=k+1,\qquad
 n_2=n_4=\begin{cases}k+1,&l=1,\\l+1,&l>1.\end{cases}
\end{equation}
Since $k\geq l\geq1$ and $g=kl>1$, all the local degrees in
\eqref{eq:quotient-local-degrees} satisfy $n_j>2$.

Let $E\subset\Sphere^3$ be the edge of the quadrilateral above
joining $Q_1=f(x_1)$ to $P_1$.
Schwarz reflection across $E$ induces an antiholomorphic
involution $\delta$ of $\mathcal S_4$.
Its extension to $\CP^1$ fixes all four branch values when $l>1$.
When $l=1$, it fixes $p_1,p_3$ and interchanges $p_2,p_4$;
this is the reflection in~\cite{HellerHeller}, with the branch
values relabelled in cyclic order. In either case it preserves the
local degrees $n_j$. Choose an affine coordinate $z=x+iy$ on
$\CP^1$, with $z=\infty$ not contained in $\pi(E)$, such that all $p_j$ are
finite and
\begin{equation}\label{eq:reflection}
 \delta(z)=\overline z.
\end{equation}
The interior $E^\circ$ projects to an interval
$I=\pi(E^\circ)\subset\R$ with endpoint $p_1$.
Fix a basepoint $z_0\in I$. We also write $z_0$ for its unique
lift in $E^\circ$.
Define
\[
 p(z)=\prod_{j=1}^4(z-p_j).
\]
The puncture set is invariant under complex conjugation, so $p$
has real coefficients. For $l>1$, all four branch values are
real in this coordinate. For $l=1$, we may normalize $z$ so that
$(p_1,p_2,p_3,p_4)=(1,i,-1,-i)$~\cite{HellerHeller}, and hence
$p(z)=z^4-1$.

The Hopf differential $\langle f_{ww},\nu\rangle\dd w^2$, where
$w$ is a local holomorphic coordinate on $\Sigma$ and $\nu$ the
unit normal, is holomorphic and invariant under
$\mathbb Z_{k+1}\times\mathbb Z_{l+1}$. It therefore descends to a
meromorphic quadratic differential $Q$ on $\CP^1$.
Near $E^\circ$, we also write $z$ for the pulled-back local
coordinate $z\circ\pi$.
If $Q$ has order $d$ at $p_j$, its holomorphic pullback has order
$n_jd+2n_j-2\geq0$, so $d\geq-1$.
Since $f$ is not totally geodesic, $Q\not\equiv0$; its divisor
has degree $-4$, forcing a simple pole at each puncture and no
zeros. Thus
\begin{equation}\label{eq:hopf}
 Q=q_0\frac{\dd z^2}{p(z)},\qquad q_0\in i\R\setminus\{0\}.
\end{equation}
Since $E$ is a geodesic and $f$ is minimal,
$\langle f_{xx},\nu\rangle=\langle f_{yy},\nu\rangle=0$
along $I$. Hence
\[
 \langle f_{zz},\nu\rangle
 =-\frac{i}{2}\langle f_{xy},\nu\rangle\in i\R
 \qquad\text{along }I.
\]
Since $p$ has real coefficients, $q_0$ is purely imaginary.
On $\Sigma$, the pullback of $Q$ has order $n_j-2$ at every
point above $p_j$. Since every $n_j>2$, the umbilics
are precisely the points above $p_1,\ldots,p_4$.

\subsection{The associated family of flat connections}
\label{sec:associated-family}
The integrable systems approach studies minimal surfaces in $\Sphere^3$ through their  associated family of flat connections.
We recall the construction  following
Hitchin~\cite{Hitchin}. Let
$f\colon\Sigma\to\Sphere^3\cong\SU(2)$ be a conformal minimal
immersion of a compact Riemann surface. We write its Maurer--Cartan form as
\[
 f^{-1}\dd f=2(\Phi-\Phi^*),
 \qquad \Phi=\tfrac12\bigl(f^{-1}\dd f\bigr)^{1,0}.
\]
Here $*$ denotes the Hermitian adjoint. On the trivial rank-two
complex bundle $\Sigma\times\C^2$, define the connection $\nabla$ by
\[
 \nabla=\dd+\tfrac12f^{-1}\dd f.
\]
Then $\nabla$ is unitary, and the conditions that $f$ is conformal and an immersion
make $\Phi$ nilpotent and nowhere vanishing. The Maurer--Cartan
equation and harmonicity of $f$
then imply that the family of $\SL(2,\C)$-connections
\begin{equation}\label{eq:global-associated-family}
 \begin{aligned}
 \nabla^\lambda
 &=\nabla+\lambda^{-1}\Phi-\lambda\Phi^*, 
 \qquad \lambda\in\C^*,
 \end{aligned}
\end{equation}
is flat for every $\lambda $. We refer to these connections as the associated family of flat connections. By construction, the connections $\nabla^\lambda$ are unitary for
$\lambda\in S^1$. At the parameters $\lambda=\pm1$
(the Sym points), we have
\[
 \nabla^{\lambda=-1}=\dd,\qquad
 \nabla^{\lambda=1}=\dd+f^{-1}\dd f.
\]
Thus both connections are trivial, and the immersion $f$ can be recovered as a gauge
transformation relating them via
$\dd+f^{-1}\dd f=\dd.f=f^{-1}\circ\dd\circ f$.
For each $\lambda\in\C^*$, a local parallel frame $F^\lambda$
for $\nabla^\lambda$ is an invertible matrix-valued function satisfying
$\nabla^\lambda F^\lambda=0$.

Choosing an initial frame $F^\lambda(z_0)$ identifies
parallel transport around loops based at $z_0$ with the monodromy representation
\[
 M^\lambda\colon\pi_1(\Sigma,z_0)\longrightarrow\SL(2,\C),
 \qquad \gamma\longmapsto M_\gamma^\lambda.
\]
For loops $\gamma,\rho$ based at $z_0$ in $\Sigma$, we use the
convention that $\gamma\circ\rho$ means first following $\rho$
and then $\gamma$.
Changing the initial frame by
$F^\lambda(z_0)\mapsto F^\lambda(z_0)C$ replaces each monodromy
matrix $M_\gamma^\lambda$ by $C^{-1}M_\gamma^\lambda C$.

\subsection{Reconstruction from holomorphic potentials}\label{sec:local-dpw}

Let $U\subset\Sigma$ be a simply connected domain
containing the lifted basepoint $z_0$, and let $\mathbb D^*_\epsilon\subset\C$ be the
punctured disk centered at $\lambda=0$ of radius $\epsilon$.
A \emph{DPW potential}~\cite{DPW} on $U$ is a family $\eta^\lambda$
of holomorphic $\mathfrak{sl}(2,\C)$-valued $(1,0)$-forms.
It depends holomorphically on $\lambda\in\mathbb D^*_\epsilon$
and has a simple pole at $\lambda=0$:
\begin{equation}\label{eq:local-potential}
 \eta^\lambda=\lambda^{-1}\eta_{-1}+\eta_0+\lambda\eta_1+\cdots.
\end{equation}
The coefficient $\eta_{-1}$ is required to be nilpotent and
nowhere vanishing on $U$.
The connection $\dd+\eta^\lambda$ is then flat for all
$\lambda\in\mathbb D^*_\epsilon$.

Let $0<r<\min(1,\epsilon)$ and choose
$0<t<\min\{r,\epsilon-r\}$. Define the open annulus
\[
 W_t=\{\lambda\in\C:r-t<|\lambda|<r+t\}\subset\mathbb D^*_\epsilon,
\]
which contains the circle $|\lambda|=r$. For $\lambda\in W_t$, let $\Psi^\lambda$
be the unique solution on $U$ of
\[
 (\dd+\eta^\lambda)\Psi^\lambda=0
\]
with initial value $\Psi^\lambda(z_0)\in\SL(2,\C)$, holomorphic in $\lambda$.
The $\SU(2)$ $r$-Iwasawa decomposition~\cite{McIntosh,BHS}
exists at every point of $U$ and gives a single-valued factorization
\begin{equation}\label{eq:iwasawa}
 \Psi^\lambda=(B^\lambda)^{-1}F^\lambda,
\end{equation}
where $(B^\lambda)^{-1}$ is the positive factor and
$F^\lambda$ is the unitary factor.
Both factors are smooth in $z$.
The positive gauge $B^\lambda$ extends holomorphically and
invertibly to $\lambda=0$.
Requiring $B^{\lambda=0}$ to be upper triangular
with positive real diagonal makes the factorization unique.
The unitary factor $F^\lambda$ is $\SU(2)$-valued for
$|\lambda|=1$. It is holomorphic in
$\lambda$ on an open annulus containing
$r\leq|\lambda|\leq r^{-1}$;
see also~\cite[Section~2, Theorem~2]{DMPW}.

The positive gauge $B^\lambda$ can be used to reconstruct the associated
family $\nabla^\lambda$ from the potential using
\[
 \dd+\eta^\lambda=(\nabla^\lambda).B^\lambda
 =(B^\lambda)^{-1}\circ\nabla^\lambda\circ B^\lambda,
 \qquad 0<|\lambda|<r.
\]
The unitary factor $F^\lambda$ is a parallel frame for $\nabla^\lambda$.
Because $F^\lambda$ is unitary, there is a constant anti-linear
quaternionic structure $j$ on $\C^2$ such that
\[
 \nabla^{1/\overline\lambda}=(\nabla^\lambda).j.
\]
The family reconstructed from~\eqref{eq:iwasawa} has the required
form~\eqref{eq:global-associated-family} by the positivity of
$B^\lambda$ and the Schwarz reflection principle applied to
the reality condition above.

Identifying $\Sphere^3$ with $\SU(2)$, we define $f$ by the
Sym--Bobenko formula
\begin{equation}\label{eq:sym-formula}
 f=(F^{\lambda=1})^{-1}F^{\lambda=-1}.
\end{equation}
In the unitary trivialization of $U\times\C^2$ determined by
$F^{\lambda=-1}$,
the connections at $\lambda=-1$ and $\lambda=1$ take the form
$\dd$ and $\dd.f$, respectively. This is consistent with the
reconstruction formula in Section~\ref{sec:associated-family}.
The leading coefficient of the reconstructed connection
form is obtained by conjugating $\eta_{-1}$ by $B^{\lambda=0}$.
Since $\eta_{-1}$ is nilpotent and nowhere vanishing, $f$ is a
conformal immersion. It is minimal because the Sym points are
$\lambda=\pm1$.

Conversely, near every point $z\in\Sigma$, the associated family
of a conformal minimal immersion is gauge equivalent to a
holomorphic potential via a positive gauge defined on a sufficiently
small $\lambda$-disc around $\lambda=0$.
This gauge depends smoothly on the surface variable and is
holomorphic and invertible in $\lambda$~\cite{DPW,Heller}.

\subsection{The monodromy problem}\label{sec:global-dpw}

\textcolor{black}{The associated family $\nabla^\lambda$ on $\Sigma$ induces flat
$\SL(2,\C)$-connections on the quotient $\mathcal S_4$
in~\eqref{eq:lawson-quotient}, also denoted
$\nabla^\lambda$~\cite{HellerHeller,HHS};
see also~\cite[Lemma~5.4]{BHS}.
This family on $\mathcal S_4$ depends holomorphically on
$\lambda\in\C^*$ and has a simple pole at $\lambda=0$.
Let $F^\lambda$ be a parallel frame for $\nabla^\lambda$ on
$\widetilde{\mathcal S_4}$,
smooth in the surface variable and holomorphic in $\lambda\in\C^*$,
with $F^\lambda(z_0)\in\SU(2)$ for $|\lambda|=1$.
For a loop $\gamma\in\pi_1(\mathcal S_4,z_0)$, write
$\gamma^*F^\lambda=F^\lambda M_\gamma^\lambda$.
The matrices $M_\gamma^\lambda$ depend holomorphically on $\lambda$.
Let $\beta$ be a loop in $\Sigma^\circ$ based at $z_0$, and let
$\gamma=\pi_*\beta$ be its projection to $\mathcal S_4$.
With corresponding initial frames on $\Sigma$, we have
\begin{equation}\label{eq:monodromy-comparison}
 M_{\pi_*\beta}^\lambda
 =\varepsilon(\beta)\widetilde M_\beta^\lambda,
\end{equation}
where $\widetilde M_\beta^\lambda$ is the monodromy on $\Sigma$
along $\beta$ and
$\varepsilon\colon\pi_1(\Sigma^\circ,z_0)\to\{\pm1\}$ is the
holonomy character of a fixed flat line bundle on $\Sigma^\circ$;
see~\cite[Section~3]{HellerHeller} and~\cite[proof of Lemma~5.4]{BHS}.}

Choose generators $\gamma_1,\ldots,\gamma_4$ of
$\pi_1(\mathcal S_4,z_0)$, with $\gamma_j$ going once positively
around $p_j$ and enclosing no other puncture.
Recall the local degrees $n_j$ from~\eqref{eq:quotient-local-degrees}.

\begin{lemma}\label{thm:monodromy-conditions}
The monodromy representation of the associated family $\nabla^\lambda$
of the Lawson surface $\xi_{k,l}$ on $\mathcal S_4$ satisfies the
following properties.
\begin{enumerate}
\renewcommand{\labelenumi}{\textup{(\arabic{enumi})}}
\item $M_\gamma^\lambda\in\SU(2)$ for every
$\gamma\in\pi_1(\mathcal S_4,z_0)$ and $\lambda\in S^1$.
\item For every $\lambda\in\C^*$ and $j=1,\ldots,4$,
\[
 \operatorname{spec}(M_{\gamma_j}^\lambda)
 =\{e^{\pi i/n_j},e^{-\pi i/n_j}\}.
\]
\item At $\lambda=\pm1$, all monodromy matrices are
simultaneously diagonalizable.
\end{enumerate}
\end{lemma}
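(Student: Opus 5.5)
The three properties come from comparing the connection on $\mathcal S_4$ with the associated family on $\Sigma$ through~\eqref{eq:monodromy-comparison}, together with the local structure of $\pi$ at the branch points. I will prove them in the order (1), (3), (2).

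\textbf{Property (1).} For $\lambda\in S^1$ the connection $\nabla^\lambda$ is unitary. Here I use~\eqref{eq:global-associated-family}: $\nabla$ is unitary and $\lambda^{-1}\Phi-\lambda\Phi^*$ is skew-Hermitian when $|\lambda|=1$. The induced connection on $\mathcal S_4$ is obtained by a unitary identification, so it is also unitary on $S^1$. We normalized $F^\lambda(z_0)\in\SU(2)$ for $|\lambda|=1$. Hence $F^\lambda$ is $\SU(2)$-valued everywhere on $\widetilde{\mathcal S_4}$, and $M^\lambda_\gamma=F^{-1}\gamma^*F\in\SU(2)$.

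\textbf{Property (3).} At $\lambda=\pm1$ the connections on $\Sigma$ are trivial, so $\widetilde M^{\pm1}_\beta=\mathrm{Id}$ for all $\beta\in\pi_1(\Sigma^\circ)$. By~\eqref{eq:monodromy-comparison}, $M^{\pm1}$ restricted to the finite-index subgroup $\pi_*\pi_1(\Sigma^\circ)$ is $\pm\mathrm{Id}$. Therefore $M^{\pm1}$ factors through a finite group, namely the deck group extended by signs. This group is $\mathbb Z_{k+1}\times\mathbb Z_{l+1}$, or $\mathbb Z_{k+1}$ when $l=1$, extended by $\pm1$.

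The key input is that the symmetry group acts on $\Sphere^3=\SU(2)$ by the commuting rotations $(z_1,z_2)\mapsto(e^{i\alpha}z_1,e^{i\beta}z_2)$. These are left and right multiplication by diagonal elements of $\SU(2)$. The monodromy at the Sym points is given by exactly these factors, since $\nabla^{-1}=\dd$ and $\nabla^{1}=\dd.f$, and equivariance $f\circ\sigma=AfB$ is transported accordingly. All the factors are diagonal, so they commute and are simultaneously diagonalizable. I would make this concrete using the lift construction of~\cite{HellerHeller,BHS}.

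\textbf{Property (2).} Fix $j$. Near a point above $p_j$ the map $\pi$ is $w\mapsto w^{n_j}$, and $\nabla^\lambda$ on $\Sigma$ is smooth there, so a small loop around the point has trivial monodromy. Going once around $p_j$ lifts to $1/n_j$ of that loop composed with the deck rotation. Hence $(M^\lambda_{\gamma_j})^{n_j}=\varepsilon\,\mathrm{Id}$. The quotient construction inserts a flat line bundle twist, and I expect it to make this power equal to $-\mathrm{Id}$. The eigenvalues are then $n_j$-th roots of $-1$.

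Next I use that $\lambda\mapsto\operatorname{tr}M^\lambda_{\gamma_j}$ is holomorphic on $\C^*$. Its value lies in a discrete set, so it is constant in $\lambda$. It therefore suffices to compute the trace at $\lambda=1$. There it equals the trace of the rotation factor given by the stabilizer of the umbilic, which is rotation by $2\pi/n_j$ lifted to $\SU(2)$. This gives eigenvalues $e^{\pm\pi i/n_j}$.

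\textbf{Main obstacle.} I expect the hardest step to be the sign bookkeeping at the branch point. One must fix the character $\varepsilon$ and the lift of the stabilizer rotation to $\SU(2)$, so as to get $e^{\pm\pi i/n_j}$ rather than $e^{\pm 2\pi i/n_j}$ or its negative. I would settle this by a direct local computation: in the coordinate $w$, the connection on $\mathcal S_4$ is the pushforward of the trivialized connection twisted by $\operatorname{diag}(w^{1/2},w^{-1/2})$-type gauges. Comparing with~\cite[Section~3]{HellerHeller} and~\cite[Lemma~5.4]{BHS} should fix the sign.
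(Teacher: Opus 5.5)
\textbf{Part (1).} Your argument is the paper's.

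\textbf{Part (3): the gap.} From \eqref{eq:monodromy-comparison} you correctly get that, at $\lambda=\pm1$, the monodromy image is a central extension, by a subgroup of $\{\pm\mathrm{Id}\}$, of a quotient of the abelian deck group. Such an extension need not be abelian. The quaternion group $Q_8\subset\SU(2)$ is a $\{\pm\mathrm{Id}\}$-extension of $\mathbb Z_2\times\mathbb Z_2$ and is not simultaneously diagonalizable.

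So everything rests on your claim that the Sym-point monodromies are, up to sign, the diagonal factors in $f\circ\sigma=AfB$. You assert this but do not prove it. At $\lambda=\pm1$ the connection on $\Sigma$ is trivial, so it does not determine the equivariant lift used to descend to $\mathcal S_4$. You would have to show that the descent uses the $\lambda$-independent gauge from $\sigma^*\nabla^\lambda=\nabla^\lambda.B$, and track it to $\lambda=\pm1$. This can be done, but it is the real content of (3), and you leave it to the references.

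The paper avoids all this with an argument using only data you already have:
\begin{enumerate}
\item Because the deck group is abelian, every commutator $\gamma\gamma_1\gamma^{-1}\gamma_1^{-1}$ lifts to a closed loop in $\Sigma^\circ$. Hence $M_\gamma M_{\gamma_1}M_\gamma^{-1}=\pm M_{\gamma_1}$ at $\lambda=\pm1$.
\item Taking traces rules out the minus sign, since $\operatorname{tr}M_{\gamma_1}=2\cos(\pi/n_1)\neq0$ by (2) and $n_1>2$.
\item So every $M_\gamma$ commutes with $M_{\gamma_1}$. Its eigenvalues are distinct, so every $M_\gamma$ preserves its eigenlines.
\end{enumerate}
This uses (2), so your order (1), (3), (2) must be reversed.

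\textbf{Part (2).} The paper simply cites \cite[proof of Lemma~5.4 and Proposition~5.5]{BHS}, and your sketch does not replace that citation. The sign $\varepsilon=-1$ on the small loop around the branch point is only ``expected.'' Even granting $(M_{\gamma_j})^{n_j}=-\mathrm{Id}$, for even $n_j$ this does not distinguish $\{e^{\pi i/n_j},e^{-\pi i/n_j}\}$ from $\{-e^{\pi i/n_j},-e^{-\pi i/n_j}\}$. Your evaluation at $\lambda=1$ therefore needs both the correct lift sign and the same unproved identification of the Sym-point monodromy with the stabilizer rotation as in (3).

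The argument that the trace is constant in $\lambda$ is fine. As written, though, (2) rests on the same reference the paper uses, and you should say so rather than present it as a derivation.
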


\begin{proof}
{\color{black}
For $\lambda\in S^1$, the monodromy of $\nabla^\lambda$ on
$\mathcal S_4$ inherits unitarity from $\nabla^\lambda$ on $\Sigma$. Property~\textup{(2)} follows from
\cite[proof of Lemma~5.4; see also Proposition~5.5]{BHS}.
At the Sym points, trivial monodromy on $\Sigma$
and~\eqref{eq:monodromy-comparison} give
$M_{\pi_*\beta}^\lambda\in\{\pm\mathrm{Id}\}$.
The deck group of $\pi$ is abelian, so every commutator
lifts to a closed loop in $\Sigma^\circ$. Hence
\begin{equation}\label{eq:sym-monodromy-conjugation}
 M_\gamma^\lambda M_{\gamma_1}^\lambda(M_\gamma^\lambda)^{-1}
 =\pm M_{\gamma_1}^\lambda,\qquad\lambda=\pm1.
\end{equation}
Taking traces in~\eqref{eq:sym-monodromy-conjugation} excludes the negative sign,
since $\operatorname{tr}(M_{\gamma_1}^\lambda)=2\cos(\pi/n_1)\ne0$.
Thus all monodromy matrices preserve the eigenlines of
$M_{\gamma_1}^\lambda$ and are simultaneously diagonalizable.
}
\end{proof}

We also recall the Fuchsian potential needed below.

\begin{proposition}\label{prop:potential}
\textcolor{black}{There exists an $r_0>0$ such that the Lawson surface $\xi_{k,l}$
admits a potential}
\begin{equation}\label{eq:potential}
 \eta^\lambda=\sum_{j=1}^4 A_j(\lambda)
                 \frac{\dd z}{z-p_j},\qquad
 \sum_{j=1}^4 A_j(\lambda)=0,\qquad \textcolor{black}{0<|\lambda|<r_0},
\end{equation}
with $A_j(\lambda)\in\mathfrak{sl}(2,\C)$ and the following properties:
\begin{enumerate}
\renewcommand{\labelenumi}{\textup{(\roman{enumi})}}
\item For every $j=1,\ldots,4$, $\lambda A_j(\lambda)$ is holomorphic for
$|\lambda|<r_0$. The eigenvalues of $A_j(\lambda)$ are
$\pm\frac1{2n_j}$, where $n_j$ is
defined in~\eqref{eq:quotient-local-degrees}.
\item The leading coefficient
$\eta_{-1}=\lim_{\lambda\to0}\lambda\eta^\lambda$ is a nowhere
vanishing nilpotent section of
$\End(\mathcal O^{\oplus 2})\otimes K_{\CP^1}(p_1+\cdots+p_4)$.
\item There is a single-valued positive gauge $B^\lambda$ on
$\mathcal S_4$, smooth in the surface variable and holomorphic and
invertible for $|\lambda|<r_0$, with
\begin{equation}\label{eq:global-positive-gauge}
 \dd+\eta^\lambda=(\nabla^\lambda).B^\lambda.
\end{equation}
\end{enumerate}
\end{proposition}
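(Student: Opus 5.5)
The plan is to build the potential on $\mathcal S_4$ from a Fuchsian potential on the compact surface $\Sigma$ and then push it down through the quotient $\pi$, following the strategy of \cite{HellerHeller,HHS,BHS}.

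\textbf{Step 1: holomorphic structure at small $\lambda$.} Consider the family $\nabla^\lambda$ on $\mathcal S_4$ from Section~\ref{sec:global-dpw}. For $\lambda$ near $0$, apply the positive gauge of Section~\ref{sec:local-dpw} locally. The $(0,1)$-part of $\nabla^\lambda$ at $\lambda=0$ is $\bar\partial^\nabla+(\text{term from }\Phi^*)$. Its $\lambda$-expansion defines a family of holomorphic structures $\bar\partial^\lambda$ on the rank-two bundle, holomorphic in $\lambda$ near $0$.

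\textbf{Step 2: the extension over the punctures and triviality.}
\begin{itemize}
\item[(a)] Near each $p_j$, $\nabla^\lambda$ is the pushforward of a smooth connection under $w\mapsto w^{n_j}$, twisted by the flat line bundle $\varepsilon$. In the coordinate $z=w^{n_j}$ it therefore has a regular singularity. Its local monodromy is conjugate to $\mathrm{diag}(e^{\pi i/n_j},e^{-\pi i/n_j})$ by Lemma~\ref{thm:monodromy-conditions}(2), with residue eigenvalues $\pm\frac1{2n_j}$.
\item[(b)] Extend the holomorphic bundle over each $p_j$ as a parabolic (Deligne-type) extension with these weights. One gets a degree-zero rank-two bundle on $\CP^1$.
\item[(c)] This bundle is trivial for $\lambda=0$, and hence for small $\lambda$ by openness of triviality. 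Triviality at $\lambda=0$ should come from the structure $\bar\partial^\nabla$ paired with the nilpotent Higgs field $\Phi$: the descended $\Phi$ is a nilpotent section of $\End\otimes K(p_1+\dots+p_4)$. Its kernel line bundle $L$ satisfies $L^{-2}\otimes K(\sum p_j)$ having a section, which bounds $\deg L$. Combined with stability of the unitary parabolic bundle (Lemma~\ref{thm:monodromy-conditions}(1)), this forces $\mathcal O\oplus\mathcal O$.
\end{itemize}

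\textbf{Step 3: reading off the potential.} Choose a holomorphic frame of the trivialized bundle depending holomorphically on $\lambda$ and normalized by a positive (upper-triangular at $\lambda=0$) gauge; this frame defines $B^\lambda$. In this frame the connection $\dd+\eta^\lambda$ is a holomorphic connection on the trivial bundle over $\CP^1$ with simple poles at the $p_j$.
\begin{itemize}
\item[(a)] It is therefore Fuchsian, $\sum_j A_j\,\dd z/(z-p_j)$, and regularity at $\infty$ gives $\sum_j A_j=0$.
\item[(b)] The eigenvalues $\pm\frac1{2n_j}$ follow from the residues in Step 2(a). The simple pole of $\lambda A_j$ at $\lambda=0$ is inherited from the $\lambda^{-1}\Phi$ term.
\item[(c)] $\eta_{-1}$ is the conjugated Higgs field. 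It is nowhere vanishing as a section of $\End\otimes K(\sum p_j)$ because $Q$ in \eqref{eq:hopf} has exactly simple poles and no zeros; the determinant-type pairing of $\eta_{-1}$ recovers $Q$ up to constants.
\end{itemize}

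\textbf{Main obstacle.} The delicate point is Step 2(c), the triviality of the parabolic extension together with making $B^\lambda$ single-valued and invertible uniformly near the punctures. There I would rely on \cite[Lemma~5.4, Prop.~5.5]{BHS} and \cite{HellerHeller}, choosing $r_0$ small enough that triviality persists.
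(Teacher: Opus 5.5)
Your architecture matches the one the paper imports from \cite{BHS} and \cite{HellerHeller}: extend the family $\bar\partial^\lambda$ over the punctures, show that the bundle $E^0$ at $\lambda=0$ is trivial, propagate triviality to small $|\lambda|$, and read off a Fuchsian system. However, your own justification of the decisive Step~2(c) does not work.

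Lemma~\ref{thm:monodromy-conditions}(1) gives unitary monodromy only for $|\lambda|=1$. So Mehta--Seshadri yields parabolic (poly)stability only for the structures $\bar\partial^\nabla-\lambda\Phi^*$ with $|\lambda|=1$. At $\lambda=0$ the holomorphic structure is exactly $\bar\partial^\nabla$; contrary to your Step~1, the $\Phi^*$-term carries a factor $\lambda$ and drops out. Moreover, $\nabla$ is unitary but not flat: the $\lambda^0$-coefficient of the flatness of $\nabla^\lambda$ gives curvature $[\Phi\wedge\Phi^*]\neq0$. So no stability is available at $\lambda=0$. At best, stability on $S^1$ together with a weight inequality shows that $E^\lambda$ is trivial for $|\lambda|=1$, hence away from a discrete jumping set of the holomorphic family. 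But $\lambda=0$ may be a jumping point, and excluding exactly this is the content of the step.

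There is also a sign slip. $\Phi$ induces a nonzero map $E^0/L\cong L^{-1}\to L\otimes K(D)$, with $D=p_1+\cdots+p_4$. So it is $L^{2}\otimes K(D)$ that has a section, which gives $\deg L\geq-1$.

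The paper treats $l=1$ via \cite[Theorem~3]{HellerHeller}. For $l>1$ it gets $E^0\cong\mathcal O^{\oplus2}$ from \cite[Theorem~5.7]{BHS}, whose inputs are the weight inequality $\sum_j 1/(2n_j)<1$ and $Q\neq0$. Neither enters your Step~2(c).

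One way to see how $Q\neq0$ enters is as follows.
\begin{itemize}
\item The local analysis at the punctures (\cite[Proposition~5.2]{BHS}) shows that $\Phi$ is nowhere vanishing on the extension. Then $L^2\otimes K(D)\cong\mathcal O$ forces $L\cong\mathcal O(-1)$.
\item Hence $E^0$ is an extension of $\mathcal O(1)$ by $\mathcal O(-1)$, so it is $\mathcal O^{\oplus2}$ or $\mathcal O(1)\oplus\mathcal O(-1)$, since $H^1(\mathcal O(-2))\cong\C$.
\item The extension class is represented by minus the adjoint of the second fundamental form $\beta$ of $L$ in the unitary connection $\nabla$. Meanwhile $Q$ is a constant multiple of $\Phi\circ\beta$.
\item Thus $Q\neq0$ makes $\beta$ a nonzero holomorphic section of $K\otimes L^{-2}$. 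Its Serre pairing with the class is, modulo the behaviour of the metric at the punctures, a nonzero multiple of $\int|\beta|^2$, which excludes the split case.
\end{itemize}

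Your Step~3(c) lacks the same ingredient. Since $\eta_{-1}$ is nilpotent, $\det\eta_{-1}\equiv0$, so $Q$ cannot be recovered from $\eta_{-1}$ alone; it involves $\eta_0$ as well. Nonvanishing of the residues of $\eta_{-1}$ at the $p_j$ must therefore come from the local normal form at the punctures, not from the pole order of $Q$.
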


\begin{proof}
For $l=1$, use~\cite[Theorem~3]{HellerHeller} for the cyclic quotient.

{\color{black}
For $l>1$, let $E^0$ be the limiting holomorphic bundle of the
logarithmic extension in~\cite[proof of Proposition~5.5]{BHS}.
For the Lawson quadrilaterals described in~\cite[Section~2.3]{BHS},
the inequality $\sum_{j=1}^4 1/(2n_j)=1/(k+1)+1/(l+1)<1$
and $Q\ne0$ imply $E^0\cong\mathcal O^{\oplus2}$
by~\cite[Theorem~5.7 and its proof]{BHS}.
The proof of that theorem, together with the local construction
in~\cite[Proposition~5.2]{BHS}, establishes~\textup{(i)--(ii)}.
The positive-gauge construction in~\cite[proof of Proposition~5.5,
equations~(5.20)--(5.21)]{BHS} proves~\textup{(iii)}.
}
\end{proof}

\begin{remark}\label{rem:potential-monodromy}
For later use, set
$\Psi^\lambda=(B^\lambda)^{-1}F^\lambda$ on
$\widetilde{\mathcal S_4}$ for $0<|\lambda|<r_0$.
For sufficiently small $r$ with $0<r<\min\{r_0,1\}$, this is the
$r$-Iwasawa factorization~\eqref{eq:iwasawa}. The reconstruction from
$\eta^\lambda$ recovers $\xi_{k,l}$ (Section~\ref{sec:local-dpw}).
Since $B^\lambda$ is single-valued, $\Psi^\lambda$ has the same
monodromy as $F^\lambda$:
\[
 \gamma^*\Psi^\lambda=\Psi^\lambda M_\gamma^\lambda.
\]
\end{remark}

\section{The functions \texorpdfstring{$C$ and $N$}{C and N}}\label{sec:functions}

In this section we introduce two functions $C$ and $N$ encoding the
position and unit surface normal along a geodesic edge, and show that
algebraicity of the Lawson surface would force a polynomial relation
between them (Proposition~\ref{prop:surface-relation}).

\subsection{Definition and analytic continuation}\label{sec:continuation}

Recall from Section~\ref{sec:lawson} the chosen geodesic edge $E$
of the fundamental quadrilateral, the interval
$I=\pi(E^\circ)\subset\R$, and the basepoint $z_0\in I$.
We use the coordinate $z$ in which $\delta(z)=\overline z$,
as in~\eqref{eq:reflection}.
Choose a smaller open interval $J\subset I$ containing $z_0$
such that, in the coordinate $z$, both the immersion $f$ and its unit normal
$\nu$ are real analytic on an open neighborhood of $J$.

{\color{black}
Using $\Sphere^3\cong\SU(2)$ and $z=x+iy$, we fix the unit
normal by
\[
 \nu=\frac{f_xf^{-1}f_y}{|f_x|^2}.
\]
Choose an orthonormal basis $(e_0,e_1,e_2,e_3)$ of $\R^4$ with
$e_0=f(z_0)$, $e_1$ tangent to $E$ at $f(z_0)$ and satisfying
$p(z_0)\langle f_x(z_0),e_1\rangle>0$, and $e_2=\nu(z_0)$.
Choose the sign of $e_3$ so that $(e_1,e_3)$ is a positively
oriented tangent basis at $f(z_0)$.
The inequality fixes the direction of $e_1$: it agrees with
$f_x(z_0)$ when $p(z_0)>0$ and has the opposite direction when
$p(z_0)<0$. No further normalization of the Hopf differential
$Q$ is required.}
Along $J$, the immersion $f$ lies in the great circle spanned by
$e_0,e_1$, and $\nu$ lies in the orthogonal great circle spanned by
$e_2,e_3$. Define $\ell,\beta\colon J\to\R$ as the local real
analytic angle functions, uniquely determined by
$\ell(z_0)=\beta(z_0)=0$ and
\begin{equation}\label{eq:circle-normal}
 f=\cos\ell\,e_0+\sin\ell\,e_1,\qquad
 \nu=\cos\beta\,e_2+\sin\beta\,e_3,
\end{equation}
so that $\ell$ is an arclength parameter along $E$, while
$\beta$ is the angle of the unit normal in the fixed plane
spanned by $e_2,e_3$. The real analytic functions $e^{i\ell}$ and
$e^{i\beta}$ on $J$ have nonvanishing holomorphic extensions to a
connected open neighborhood $\mathcal U\subset\mathcal S_4$ of $J$.

\begin{definition}\label{def:functions}
Define $C,N\colon\mathcal U\to\C^*$ as the unique holomorphic
functions satisfying
\begin{equation}\label{eq:functions}
 C|_J=e^{i\ell},\qquad N|_J=e^{i\beta}.
\end{equation}
We call $C$ and $N$ the \emph{circle coordinates} of the immersion $f$
and its unit normal $\nu$, respectively.
We use the same notation for their germs at $z_0$, identifying two
holomorphic functions if they agree on some neighborhood of $z_0$.
\end{definition}

Let $\widetilde{\mathcal S_4}\to\mathcal S_4$ be the
universal covering, and fix a lift $\widetilde z_0$ of $z_0$.
We also denote by
$\delta$ the lift of the involution~\eqref{eq:reflection}
fixing $\widetilde z_0$. For a meromorphic
function $u$ on $\widetilde{\mathcal S_4}$, set
\[
 u^\#=\overline{u\circ\delta}.
\]
For a meromorphic differential $\omega$, define
$\omega^\#=\overline{\delta^*\omega}$.
For a matrix-valued function $A(z,\lambda)$, we use $\#$ to combine
reflection in $z$ and $\lambda$ with the Hermitian adjoint:
\[
 A^\#(z,\lambda)=\overline{A(\delta z,1/\overline\lambda)}^{\,T}.
\]
Since $q_0\in i\R$ and $p\in\R[z]$,
equation~\eqref{eq:hopf} gives $Q^\#=-Q$ and hence
$(2iQ)^\#=2iQ$.
For a germ $u$ at $z_0$, write $u^\gamma$ for its analytic continuation along a
loop $\gamma$ based at $z_0$.

\begin{proposition}\label{prop:continuation}
The germs $C$ and $N$ admit analytic continuation to nonconstant
meromorphic functions on $\widetilde{\mathcal S_4}$. They satisfy
\begin{equation}\label{eq:differential-relation}
 \dd\log C\,\dd\log N
 =2iQ=2iq_0\frac{\dd z^2}{p(z)},
\end{equation}
where $Q$ is the Hopf
differential in~\eqref{eq:hopf}, and
\begin{equation}\label{eq:scalar-reflection}
 C^\#=C^{-1},\qquad N^\#=N^{-1}.
\end{equation}
For each $j$, analytic continuation along $\gamma_j^{n_j}$ returns
every branch of $(C,N)$ to its initial germs. Here $n_j$ is defined
in~\eqref{eq:quotient-local-degrees}.
\end{proposition}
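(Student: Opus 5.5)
The plan is to obtain all four assertions from a single global object: the complexification of $f$ and $\nu$ evaluated along the "reflected diagonal" $w=\delta(z)$. On $J$ we have $\delta(z)=\overline z=z$. Let $f^{\C}(z,w)$ denote the complexification of the real analytic map $f$, holomorphic in $z$ and in $\overline w$, with $f^{\C}(z,z)=f(z)$. Then the holomorphic extension of $f|_J$ is $z\mapsto f^{\C}(z,\delta z)$. Using~\eqref{eq:circle-normal}, I will write
\[
 C=\langle f^{\C}(z,\delta z),e_0+ie_1\rangle_{\C},
\]
and similarly $N$ from $\nu^{\C}$ with $e_2+ie_3$. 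This identity holds near $J$, since $\cos\ell+i\sin\ell=e^{i\ell}$ there. So it suffices to continue $f^{\C}(z,\delta z)$ and $\nu^{\C}(z,\delta z)$ meromorphically to $\widetilde{\mathcal S_4}$.

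\textbf{Step 1 (continuation).} For this I use the Fuchsian potential of Proposition~\ref{prop:potential} and Remark~\ref{rem:potential-monodromy}. The holomorphic frame $\Psi^\lambda$ continues holomorphically to all of $\widetilde{\mathcal S_4}$, and so does $(\Psi^\lambda)^\#$, which is again holomorphic in $z$ because $\delta$ is antiholomorphic. The unitary frame is obtained from the $r$-Iwasawa factorization of $\Psi^\lambda$. Its complexification at $(z,\delta z)$ is obtained by a Birkhoff-type factorization of the loop $(\Psi^\lambda(z))^{\#}{}^{-1}\cdot\Psi^\lambda(z)$, or its analogue, in which the unitary reality condition is replaced by the $\#$-condition. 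This factorization exists off a discrete set and depends meromorphically on $z$ (the big cell is open dense, with a meromorphic complement). Evaluating at $\lambda=\pm1$ via the Sym formula~\eqref{eq:sym-formula} then gives $f^{\C}(z,\delta z)$ as a meromorphic function on $\widetilde{\mathcal S_4}$. The normal $\nu^{\C}$ is a rational expression in $f^{\C}$ and its derivatives, so it is meromorphic as well. Hence $C$ and $N$ continue meromorphically.

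Nonconstancy of $C$ and $N$ is immediate on $J$: $\ell$ is arclength, and $\beta$ is nonconstant because the Hopf differential does not vanish (this is also forced by Step 2). I expect this meromorphic-continuation step to be the main obstacle. The difficulty is controlling that the factorization of the continued loops degenerates only on a discrete set. I would first try to show it by writing the factorization through the $\lambda=\pm1$ values of a single $\#$-symmetric holomorphic object, so that only inverting a holomorphic determinant is needed.

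\textbf{Step 2 (differential relation).} It suffices to verify~\eqref{eq:differential-relation} on $J$ and then apply the identity theorem. Write the metric as $e^{2u}|\dd z|^2$. Along $J$, $f_x=\ell_x(-\sin\ell\,e_0+\cos\ell\,e_1)$, so $\ell_x=\pm e^u$. Since $E$ is a geodesic and $\nu$ rotates in the $e_2e_3$-plane, $\nu_x=\beta_x(-\sin\beta\,e_2+\cos\beta\,e_3)$ is tangent, pointing along $\pm f_y$, and $\beta_x=\pm e^{-u}\langle f_{xy},\nu\rangle$. Hence $\dd\log C\,\dd\log N=-\ell_x\beta_x\,\dd z^2=\mp\langle f_{xy},\nu\rangle\dd z^2$. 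From Section~\ref{sec:lawson}, $\langle f_{zz},\nu\rangle=-\tfrac i2\langle f_{xy},\nu\rangle$, so this equals $\pm 2iQ$. The orientation choices for $e_1,e_3$, namely $p(z_0)\langle f_x,e_1\rangle>0$ and $(e_1,e_3)$ positively oriented, are exactly what fix the sign to $+$, using~\eqref{eq:hopf}. I expect this sign bookkeeping to be routine but fiddly.

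\textbf{Step 3 (reflection).} Both sides of~\eqref{eq:scalar-reflection} are meromorphic on $\widetilde{\mathcal S_4}$. On $J$, the lift $\delta$ fixes points and $|C|=|N|=1$, so $\overline{C\circ\delta}=\overline C=C^{-1}$ there, and likewise for $N$. The identity theorem then gives $C^\#=C^{-1}$ and $N^\#=N^{-1}$ globally.

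\textbf{Step 4 (local return).} Continuation along $\gamma_j^{n_j}$ acts on $\Psi^\lambda$ by $(M_{\gamma_j}^\lambda)^{n_j}$. By Lemma~\ref{thm:monodromy-conditions}(2), $M_{\gamma_j}^\lambda$ has eigenvalues $e^{\pm\pi i/n_j}\ne\pm1$, so it is diagonalizable and $(M_{\gamma_j}^\lambda)^{n_j}=-\mathrm{Id}$ for all $\lambda$. This sign drops out of $f=(F^{1})^{-1}F^{-1}$ and of its complexification, which involve the frame quadratically. The same holds for the reflected factor, since $\delta$ preserves the local degrees, so $(\delta\gamma_j)^{n_j}$ also acts by $-\mathrm{Id}$. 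Therefore every branch of $(C,N)$, that is, its continuation along any path followed by $\gamma_j^{n_j}$, returns to itself. This uses that the continued factorization commutes with a constant central sign.
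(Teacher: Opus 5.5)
Your overall route is the paper's: a DMPW-type factorization of $\Psi$ against $\Psi^\#$ gives the continuation. The relations \eqref{eq:differential-relation} and \eqref{eq:scalar-reflection} are checked on $J$ and propagated by the identity theorem. The central monodromy $(M_{\gamma_j})^{n_j}=-\mathrm{Id}$, also along the conjugated and reflected loops, cancels in the Sym formula. Steps 2--4 are correct as outlined. One small point on Step 2: the sign is fixed by the positive orientation of $(e_1,e_3)$ alone, and the condition on $p(z_0)$ plays no role there. Flipping $e_1$ and $e_3$ together replaces $(C,N)$ by $(C^{-1},N^{-1})$ and leaves $\dd\log C\,\dd\log N$ unchanged.

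Step 1, however, is not yet a proof, for two reasons.

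First, the loop $(\Psi^\#)^{-1}\Psi$ is not defined. $\Psi$ exists only for $0<|\lambda|<r_0$ and $\Psi^\#$ only for $|\lambda|>r_0^{-1}$, and nothing guarantees $r_0>1$. Moreover, with the convention $\Psi=B^{-1}F$, the combination in which the unitary factor cancels is $\Psi\Psi^\#$, not $(\Psi^\#)^{-1}\Psi$. What one actually solves is the two-circle problem
$\Psi=P_0^{-1}\mathcal F$ on $|\lambda|=r$ and $(\Psi^\#)^{-1}=P_\infty^{-1}\mathcal F$ on $|\lambda|=r^{-1}$, normalized by $P_\infty(\infty)=\mathrm{Id}$.
\begin{itemize}
\item This normalization is what turns your claim in Step 4, that the factorization commutes with a central sign, into a genuine uniqueness statement.
\item The fact that the factors acquire at most poles where the factorization fails is exactly what the paper imports from DMPW, Theorem 3. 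Existence off a discrete set, which is what ``the big cell is open dense'' gives you, is not enough.
\end{itemize}

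Second, and more substantively, your construction only yields $f^\C$ on the reflected diagonal $w=\delta z$, that is, the continuation of $f|_J$. That restriction does not determine the transverse derivative. Its $z$-derivative continues $f_x|_J$, but not
$f_y|_J=\ell_x(-\sin\beta\,e_2+\cos\beta\,e_3)$, which carries exactly the information in $\nu|_J$. So ``$\nu^\C$ is rational in $f^\C$ and its derivatives'' does not give the continuation of $N$ from what you have built. There are two ways to fix this.
\begin{itemize}
\item Factor $\Psi$ at $z$ against $\Psi^\#$ at a second, independent point. Then $f^\C$ becomes meromorphic in two variables near the reflected diagonal, and both partial derivatives are available.
\item Use a frame formula for $\nu$, as the paper does. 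Take a section $v$ of $L=\ker\eta_{-1}$ and build the matrix $\mathcal P$ from $a=P_0^{\lambda=0}v$ and $v^\#$. It is meromorphic on $\widetilde{\mathcal S_4}$, and on $J$ it equals the orthogonal projection onto $B_0L$ written in the frame $\mathcal F=B_0^*F$. This gives $\nu=i(\mathcal F^{\lambda=1})^{-1}(2\mathcal P-\mathrm{Id})\mathcal F^{\lambda=-1}$. Since $\mathcal P$ involves only $P_0$, the sign argument of your Step 4 then applies to $N$ verbatim.
\end{itemize}
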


\begin{proof}
Along $J$, equations~\eqref{eq:circle-normal} and~\eqref{eq:functions} give
\begin{equation}\label{eq:linear-circle-coordinates}
 C=\langle f,e_0\rangle+i\langle f,e_1\rangle,\qquad
 N=\langle\nu,e_2\rangle+i\langle\nu,e_3\rangle.
\end{equation}
We use the holomorphic potential to prove that the local analytic
continuations of $f|_J$ and $\nu|_J$, and hence of $C,N$, extend
meromorphically throughout $\widetilde{\mathcal S_4}$.
Let $\eta^\lambda$ be the Fuchsian potential from
Proposition~\ref{prop:potential} for the associated family
$\nabla^\lambda$ of the Lawson surface on $\mathcal S_4$.
On $\widetilde{\mathcal S_4}$, let $F^\lambda$ be the parallel
frame of $\nabla^\lambda$ from Section~\ref{sec:global-dpw}, and set
$\Psi^\lambda=(B^\lambda)^{-1}F^\lambda$ as in
Remark~\ref{rem:potential-monodromy}. Then $\Psi^\lambda$ is a
parallel frame for $\dd+\eta^\lambda$.
The frames $\Psi^\lambda$ and $F^\lambda$ are holomorphic and
invertible in $\lambda$ for
$0<|\lambda|<r_0$ and $\lambda\in\C^*$, respectively.
\textcolor{black}{Choose $r$ sufficiently small with}
\begin{equation}\label{eq:factorization-radius}
 \textcolor{black}{0<r<\min\{r_0,1\},}
\end{equation}
and $0<t<\min\{r,r_0-r\}$.
Consider the $r$-Iwasawa factorization
$\Psi^\lambda=(B^\lambda)^{-1}F^\lambda$, with positive factor
$(B^\lambda)^{-1}$ and unitary factor $F^\lambda$.
We suppress the superscripts $\lambda$ and set $B_0=B^{\lambda=0}$.
The positive gauge $B$ is holomorphic and invertible in $\lambda$
for $|\lambda|<r_0$. Thus $B^\#$ and $\Psi^\#$ are holomorphic
and invertible for $|\lambda|>r_0^{-1}$, and $B^\#$ extends
holomorphically and invertibly to $\lambda=\infty$.
For $z\in J\subset\R$, we have $\delta z=\overline z=z$.
Thus, for $|\lambda|=1$ and $z\in J$,
$F^\#=F^*=F^{-1}$ by unitarity of $F$.
By holomorphicity in $\lambda$, $F^\#=F^{-1}$ holds for all
$\lambda\in\C^*$ and $z\in J$.
For each $z\in J$, define the following holomorphic and invertible
functions of $\lambda$ on the indicated domains:
\begin{equation}\label{eq:factorization-on-edge}
 \begin{aligned}
 P_0&=B_0^*B,
   &&|\lambda|<r_0,\\
 P_\infty&=B_0^*(B^\#)^{-1},
   &&|\lambda|>r_0^{-1}\text{, including }\lambda=\infty,\\
 \mathcal F&=B_0^*F,
   &&\lambda\in\C^*.
 \end{aligned}
\end{equation}
Substituting~\eqref{eq:factorization-on-edge} into the Iwasawa
factorization and using $\Psi^\#=F^{-1}(B^\#)^{-1}$ gives the
factorization of~\cite[Section~2, Theorem~1]{DMPW} in the form
\begin{equation}\label{eq:two-circles}
 \begin{aligned}
 \Psi&=B^{-1}F=P_0^{-1}\mathcal F
                  &&\text{on }|\lambda|=r,\\
 (\Psi^\#)^{-1}&=B^\#F=P_\infty^{-1}\mathcal F
                  &&\text{on }|\lambda|=r^{-1}.
 \end{aligned}
\end{equation}
The normalization $P_\infty^{\lambda=\infty}=\mathrm{Id}$ follows from
$(B^\#)^{\lambda=\infty}=B_0^*$ and makes this factorization unique.
Since $\eta^\lambda$ is holomorphic on $\mathcal S_4$, its parallel
frame $\Psi$ is holomorphic in $z$ on
$\widetilde{\mathcal S_4}$.
The functions $\Psi^{-1}$ on $|\lambda|=r$ and $\Psi^\#$ on
$|\lambda|=r^{-1}$ therefore depend holomorphically on $z$, so the
normalized factors $P_0,P_\infty,\mathcal F$ continue holomorphically
in $z$ to a neighborhood of $J$.
These local continuations are holomorphic in $\lambda$: $P_0$ for
$|\lambda|<r$, $P_\infty$ for $|\lambda|>r^{-1}$ (including
$\lambda=\infty$), and $\mathcal F$ for $r<|\lambda|<r^{-1}$,
with smooth boundary values on the corresponding circles.
Normalizing this factorization at $z_0$, we apply
\cite[Section~2, Theorem~3]{DMPW} to obtain analytic continuation
of these factors as meromorphic functions in $z$ on
$\widetilde{\mathcal S_4}$.
Their poles lie in a common discrete exceptional set
in $z$, independent of $\lambda$. Away from this set,
$P_0,P_\infty,\mathcal F$ are holomorphic and invertible in $\lambda$
on the domains specified above.
The theorem ensures that the singularities in $z$ arising where
the factorization fails are at most poles. In particular,
$\mathcal F^{\lambda=1}$ and $\mathcal F^{\lambda=-1}$ are
meromorphic in $z$.

By Proposition~\ref{prop:potential}(ii), in a coordinate $u$
centered at a puncture we can write $\eta_{-1}=A(u)\dd u/u$,
where $A$ is holomorphic, nilpotent and nonzero. Thus $A$ has constant rank
one, and $L=\ker\eta_{-1}$ extends across the punctures as a
holomorphic line subbundle of $\mathcal O^{\oplus 2}$ on $\CP^1$.
To express the unit surface normal $\nu$ in the meromorphic frame
$\mathcal F$, we construct the corresponding meromorphic expression
for the orthogonal projection $\Pi$ onto $B_0L$ along $J$.
Let $v=(v_1,v_2)$ be a nonzero meromorphic section of $L$.
Define the meromorphic section of
$\widetilde{\mathcal S_4}\times\C^2$ by
\begin{equation}\label{eq:section-components}
 a=(a_1,a_2)=P_0^{\lambda=0}v.
\end{equation}
Define the meromorphic matrix
\[
 \mathcal P=\frac{1}{v_1^\#a_1+v_2^\#a_2}
 \begin{pmatrix}
 a_1v_1^\#&a_1v_2^\#\\
 a_2v_1^\#&a_2v_2^\#
 \end{pmatrix}.
\]
On $J$, set
\begin{equation}\label{eq:edge-components}
 k=(k_1,k_2)=B_0v.
\end{equation}
The vector $k$ spans $B_0L$ wherever $v$ is finite and nonzero.
By~\eqref{eq:factorization-on-edge}, $P_0^{\lambda=0}=B_0^*B_0$
on $J$, so $a=B_0^*k$. Together with
$v_i^\#=\overline{v_i}$ on $J$, this gives
\begin{equation}\label{eq:kernel-projection}
 \mathcal P=B_0^*\Pi(B_0^*)^{-1},\qquad
 \Pi=\frac{kk^*}{k^*k}.
\end{equation}
On $J$, the denominator
$v_1^\#a_1+v_2^\#a_2$ of $\mathcal P$ equals
$k^*k$ in~\eqref{eq:kernel-projection}, which is positive
on the subset of $J$ where $v$ is finite and nonzero.
Thus $\mathcal P$ represents the geometric projection $\Pi$ in the
meromorphic frame $\mathcal F=B_0^*F$ along $J$.
The Sym--Bobenko formula~\eqref{eq:sym-formula} and the expression
for the unit surface normal~\cite[Theorem~14.1, equations~(14.10)--(14.11)]{Bobenko}
give, along $J$,
\begin{equation}\label{eq:meromorphic-reconstruction}
 \begin{aligned}
 f&=(\mathcal F^{\lambda=1})^{-1}\mathcal F^{\lambda=-1},\\[4pt]
 \nu&=i(\mathcal F^{\lambda=1})^{-1}(2\mathcal P-\mathrm{Id})\mathcal F^{\lambda=-1}.
 \end{aligned}
\end{equation}
Here we substituted $\mathcal F=B_0^*F$ and used
\eqref{eq:kernel-projection}, with the chosen normal orientation.
Since $\mathcal F^{\lambda=\pm1}$ and $\mathcal P$ are meromorphic on
$\widetilde{\mathcal S_4}$, these formulas give analytic continuations
of $f|_J$ and $\nu|_J$, and hence of $C$ and $N$
by~\eqref{eq:linear-circle-coordinates}.

To prove~\eqref{eq:differential-relation}, write $z=x+iy$.
Differentiating the first identity in~\eqref{eq:circle-normal} along $J$ gives
\[
 f_x=\ell_x(-\sin\ell\,e_0+\cos\ell\,e_1).
\]
Conformality gives $|f_y|=|f_x|=|\ell_x|$.
Since $f_y$ is orthogonal to $f,f_x,\nu$, we obtain
\[
 f_y=\ell_x(-\sin\beta\,e_2+\cos\beta\,e_3)
 \qquad\text{on }J,
\]
with the sign fixed by our choice of $e_3$.
Differentiating $\langle f_y,\nu\rangle=0$ gives
$\langle f_{xy},\nu\rangle=-\langle f_y,\nu_x\rangle
=-\ell_x\beta_x$. Thus, using~\eqref{eq:hopf} and the computation following it,
we obtain
\[
 \begin{gathered}
 Q=-\frac{i}{2}\langle f_{xy},\nu\rangle\dd z^2
   =\frac{i}{2}\ell_x\beta_x\,\dd z^2,\\[4pt]
 \dd\log C=i\ell_x\,\dd z,\qquad
 \dd\log N=i\beta_x\,\dd z,\\[4pt]
 \dd\log C\,\dd\log N=-\ell_x\beta_x\,\dd z^2=2iQ
 \end{gathered}
\]
along $J$, and hence everywhere by analytic continuation.
Since $Q\ne0$, both functions are nonconstant. The
identities~\eqref{eq:scalar-reflection} follow from
$|C|=|N|=1$ on $J$.

Finally, we show that $C$ and $N$ have trivial monodromy along
$\gamma_j^{n_j}$.
By Lemma~\ref{thm:monodromy-conditions}\textup{(2)}, the local
monodromy eigenvalues are
\begin{equation}\label{eq:local-eigenvalues}
 e^{\pi i/n_j},\qquad e^{-\pi i/n_j}.
\end{equation}
They are distinct and their $n_j$-th powers both equal $-1$,
so
\begin{equation}\label{eq:local-central-monodromy}
 M_{\gamma_j}^{n_j}=-\mathrm{Id}.
\end{equation}

If $\delta(p_j)=p_a$, then $n_a=n_j$ and the reflected loop
$\delta\circ\gamma_j^{n_j}$ is freely homotopic to
$\gamma_a^{-n_a}$. The monodromy matrix of $\nabla^\lambda$
along the reflected loop is therefore conjugate to
$(M_{\gamma_a}^{n_a})^{-1}=-\mathrm{Id}$
by~\eqref{eq:local-central-monodromy}.
For every $\rho\in\pi_1(\mathcal S_4,z_0)$, the monodromy along
$\rho^{-1}\gamma_j^{n_j}\rho$ and its reflection is also
$-\mathrm{Id}$, since this matrix is central.
Consequently, analytic continuation along
$\rho^{-1}\gamma_j^{n_j}\rho$ multiplies both $\Psi$ and
$(\Psi^\#)^{-1}$ in~\eqref{eq:two-circles} by $-1$.
Uniqueness leaves $P_0,P_\infty$ unchanged and replaces
$\mathcal F$ by $-\mathcal F$.
The sign cancels in~\eqref{eq:meromorphic-reconstruction}, so
both $f$ and $\nu$ are unchanged. Thus continuation along
$\gamma_j^{n_j}$ fixes every branch $(C^\rho,N^\rho)$:
\[
 (C^\rho)^{\gamma_j^{n_j}}=C^\rho,\qquad
 (N^\rho)^{\gamma_j^{n_j}}=N^\rho.\qedhere
\]
\end{proof}

\subsection{A local normal form}\label{sec:normal-form}

We next determine a local normal form of the potential $\eta^\lambda$
under a positive gauge, using the immersion and its unit normal along $E$,
analogous to the boundary potential construction for the
Bj\"orling problem in $\R^3$~\cite[Theorems~3.1 and~4.1]{BranderDorfmeister}.

Let $L=\ker\eta_{-1}$. Since $\eta_{-1}$ is nilpotent and of constant rank we have
\[
 \mathcal O^{\oplus 2}/L\cong L\otimes K_{\CP^1}(p_1+\cdots+p_4).
\]
Comparing degrees, we obtain $L\cong\mathcal O(-1)$.
After a constant gauge $K\in\SL(2,\C)$, $L$ is therefore spanned
by $(1,z)$.
Since $\eta_{-1}$ is nilpotent, has simple poles at the
punctures, and is regular at infinity, we obtain
\begin{equation}\label{eq:leading-potential}
 \eta_{-1}=\frac{c_*}{p(z)}
 \begin{pmatrix}-z&1\\-z^2&z\end{pmatrix}\dd z,
 \qquad c_*\in\C\setminus\{0\}.
\end{equation}

Under the gauge $K$, the parallel frame $\Psi$ for $\dd+\eta^\lambda$
is replaced by $K^{-1}\Psi$, with initial value $K^{-1}\Psi(z_0)$.
Together with $B\mapsto BK$, this preserves $\Psi=B^{-1}F$
and the monodromy matrices.

Choose a square root of $c_*$ and normalize the
section by $v=(v_1,v_2)=\sqrt{c_*}(1,z)$. For this normalized
section, define $a,k$ by
\eqref{eq:section-components} and~\eqref{eq:edge-components}.
Define $h$ by
\begin{equation}\label{eq:scalar-h}
 h=v_1^\#a_1+v_2^\#a_2.
\end{equation}
Since $P_0^{\lambda=0}$ is meromorphic on
$\widetilde{\mathcal S_4}$, so is $h$. Let
\[
 \widehat{\mathcal S}_4\longrightarrow\widetilde{\mathcal S_4}
\]
be the covering of degree at most two branched at the zeros
and poles of $h$ of odd order, on which $\sqrt h$ is
single-valued and meromorphic.
Along $J$, $h=k^*k>0$.
We choose the holomorphic branch of $\sqrt h$ near $J$ that is
positive on $J$.
By the identity theorem, $h^\#=h$, and this branch satisfies
$(\sqrt h)^\#=\sqrt h$ near $J$.

\begin{corollary}\label{cor:edge-normal-form}
\textcolor{black}{There is a positive gauge $G$, meromorphic in $z$ on
$\widehat{\mathcal S}_4$, such that}
\[
 (G^{\lambda=0})_{11}=\frac{v_1}{\sqrt h},\qquad
 (G^{\lambda=0})_{21}=\frac{v_2}{\sqrt h},
\]
and, for $0<|\lambda|<r$,
\begin{equation}\label{eq:edge-normal-form}
 \xi^\lambda=G^{-1}\eta^\lambda G+G^{-1}\dd G
 =\begin{pmatrix}
 0&\lambda^{-1}\omega-\vartheta\\
 \vartheta-\lambda\omega&0
 \end{pmatrix},
\end{equation}
\textcolor{black}{where}
\begin{equation}\label{eq:edge-differentials}
 \omega=\frac{1}{2i}\dd\log C=\frac{h}{p(z)}\dd z,
 \qquad \vartheta=\frac{1}{2i}\dd\log N
 =\frac{q_0}{2ih}\dd z.
\end{equation}
The right-hand side of~\eqref{eq:edge-normal-form} defines
$\xi^\lambda$ for all $\lambda\in\C^*$.
\textcolor{black}{For $r<r_0$ as in~\eqref{eq:factorization-radius},
the parallel frame $\widehat\Psi=G^{-1}\Psi$ for $\dd+\xi^\lambda$
extends holomorphically in $\lambda$ to $r<|\lambda|<r^{-1}$
away from the poles of $G$, and satisfies}
\[
 \widehat\Psi^\lambda(z)\in\SU(2)
 \qquad\text{for }z\in J\text{ and }|\lambda|=1.
\]
\par
\end{corollary}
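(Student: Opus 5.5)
The gauge $G$ will be built in two stages. First a pointwise triangular gauge at $\lambda=0$ turns $\eta_{-1}$ into the off-diagonal form $\lambda^{-1}\omega\,\bigl(\begin{smallmatrix}0&1\\0&0\end{smallmatrix}\bigr)$. Then the remaining positive gauge is fixed by the unitary structure along $J$, following the Bj\"orling/boundary-potential argument of~\cite[Theorems~3.1 and~4.1]{BranderDorfmeister}.

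\textbf{Step 1: the $\lambda=0$ part of $G$.} Take $G_0=G^{\lambda=0}$ with first column $v/\sqrt h$, where $v=\sqrt{c_*}(1,z)$ spans $L=\ker\eta_{-1}$. Choose a second column $w$ with $\det G_0=1$; it is determined up to adding multiples of $v$. A direct computation with~\eqref{eq:leading-potential} gives
\[
G_0^{-1}\eta_{-1}G_0=\begin{pmatrix}0&*\\0&0\end{pmatrix},\qquad *=\frac{c_*\,\det(v,\ \cdot\ )\ \text{-type expression}}{p(z)}\cdot\frac{1}{v\text{-normalization}}\,\dd z.
\]
Using $\det(v/\sqrt h,w)=1$ and $\eta_{-1}w=\tfrac{c_*}{p}\det(\cdot)\,v$, the upper-right entry comes out as $h\,\dd z/p(z)$. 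This matches the claimed $\omega$, and it explains the normalization $\sqrt{c_*}$ of $v$.

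\textbf{Step 2: identifying $\omega$ and $\vartheta$ with $C$ and $N$ on $J$.} Along $J$, $h=k^*k$ with $k=B_0v$. So $G_0$ equals $B_0^{-1}$ times a unitary matrix $U_0$, namely the matrix with columns $k/|k|$ and its orthogonal complement. Consequently $B_0G_0$ is unitary on $J$. The candidate for the full gauge is $G=B^{-1}U$ on $J$ for a suitable $\lambda$-independent or $\lambda$-dependent unitary $U$. Then $\widehat\Psi=G^{-1}\Psi=U^{-1}F$ is $\SU(2)$-valued for $|\lambda|=1$, which is the last assertion.

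Rather than build $U$ directly, I would argue through the Bj\"orling data. The frame $\widehat\Psi$ should be the adapted frame $(f,\text{tangent},\nu)$ along the geodesic edge. In that frame the Maurer--Cartan form of a surface containing a geodesic with normal rotating in the orthogonal circle is exactly off-diagonal with entries $\lambda^{-1}\omega-\vartheta$ and $\vartheta-\lambda\omega$. Here $\omega=\tfrac12\dd\ell$ and $\vartheta=\tfrac12\dd\beta$. This is the computation already used in the proof of Proposition~\ref{prop:continuation}: along $J$ one has $\dd\log C=i\ell_x\dd z$ and $\dd\log N=i\beta_x\dd z$. This gives $\omega=\tfrac1{2i}\dd\log C$ and $\vartheta=\tfrac1{2i}\dd\log N$. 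The product relation~\eqref{eq:differential-relation} then yields
\[
\vartheta=\frac{2iQ}{(2i)^2\,\omega}=\frac{q_0}{2ih}\,\dd z .
\]
The real analytic forms on $J$ are then continued holomorphically, and meromorphically on $\widehat{\mathcal S}_4$ via $h$ and $\sqrt h$.

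\textbf{Step 3: the main obstacle, constructing $G$ globally.} The hard part is showing that the positive gauge, defined a priori only near $J$ where Iwasawa data are real, extends meromorphically to $\widehat{\mathcal S}_4$ and is positive, meaning holomorphic in $|\lambda|<r$ with the prescribed $\lambda=0$ term. I would proceed as follows.

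First, define $\xi^\lambda$ by the right-hand side of~\eqref{eq:edge-normal-form}. Its coefficients are meromorphic on $\widehat{\mathcal S}_4$ by Step 2.

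Second, let $\widehat\Psi$ solve $(\dd+\xi^\lambda)\widehat\Psi=0$, normalized at $z_0$ to agree with $U_0^{-1}F(z_0)$. Set $G=\Psi\widehat\Psi^{-1}$.

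Third, verify the required properties of $G$. Since both frames are holomorphic in $z$, $G$ is holomorphic in $z$, and its monodromy cancels on $\widehat{\mathcal S}_4$ wherever both continue. On $J$, $G$ coincides with the gauge $B^{-1}U$ from Step 2, so it is positive there. Positivity, that is holomorphic extension to $\lambda=0$, is an open condition in $z$ in the sense of the factorization, and the identity theorem in $z$ propagates it. Poles can arise only from the poles and odd zeros of $h$ in $\xi$.

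Fourth, the extension of $\widehat\Psi$ to $r<|\lambda|<r^{-1}$ is automatic, because $\xi^\lambda$ is defined for all $\lambda\in\C^*$. Unitarity on $J$ for $|\lambda|=1$ holds because $\xi^\lambda$ is $\mathfrak{su}(2)$-valued there: on $J$, $\omega$ and $\vartheta$ are real, and the entries satisfy the relation $-\overline{(\lambda^{-1}\omega-\vartheta)}=\vartheta-\lambda\omega$ when $|\lambda|=1$.
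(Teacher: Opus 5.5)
Your Step~1 computation is correct: with $\det(v,w)=\sqrt h$ one gets $\eta_{-1}w=\frac{h}{p}\frac{v}{\sqrt h}\,\dd z$. The identity-theorem propagation of positivity in Step~3 is also sound, since the Laurent coefficients of $\Psi\widehat\Psi^{-1}$ at $\lambda=0$ are holomorphic in $z$. But that propagation starts from $G=B^{-1}U$ on $J$ with $(\nabla^\lambda).U=\dd+\xi^\lambda$, and Step~2 asserts this without proof.

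There is no freedom in $U$. It must be $\lambda$-independent, and to produce the prescribed first column of $G^{\lambda=0}$ it must equal $G_{\mathrm u}=h^{-1/2}\bigl(\begin{smallmatrix}k_1&-\overline{k_2}\\ k_2&\overline{k_1}\end{smallmatrix}\bigr)$. Two consequences follow:
\begin{itemize}
\item the vanishing of the diagonal part of $(\nabla^\lambda).G_{\mathrm u}$ is not a normalization you may choose;
\item the identity $h\,\dd z/p=\frac1{2i}\dd\log C$, which ties $B_0$ to the geometry, is not contained in the computation of Proposition~\ref{prop:continuation}, which only gives $\dd\log C=i\ell_x\,\dd z$.
\end{itemize}
Saying that $\widehat\Psi$ ``should be the adapted frame'' assumes the conclusion. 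The paper derives it as follows. Unitarity of $G_{\mathrm u}$ and reality of $\omega=h\,\dd z/p$ on $J$ give the shape $\bigl(\begin{smallmatrix}i\mu&\lambda^{-1}\omega+b\\ -\overline b-\lambda\omega&-i\mu\end{smallmatrix}\bigr)$. The Sym formula gives $\dd f=2\omega T_1$, with the sign fixed by $p\ell_x>0$ (the choice of $e_1$), so $\dd\ell=2\omega$. The great-circle condition $\dd T_1=-2\omega f$ forces $\mu=0$ and $\operatorname{Im}b=0$. Then $\dd\nu=-2bT_2$ gives $\dd\beta=-2b=2\vartheta$.

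The second gap is meromorphy on $\widehat{\mathcal S}_4$. Your $G=\Psi\widehat\Psi^{-1}$, with $\widehat\Psi$ solving $\dd+\xi^\lambda$, is a priori defined only on the universal cover of the complement of the poles of $\dd\log C$ and $\dd\log N$. These are simple poles of $\xi^\lambda$. However, its residue there has eigenvalues $\pm\frac12\ord C$, respectively $\pm\frac12\ord N$, which differ by an integer. This is the resonant case, so logarithmic terms (nontrivial unipotent local monodromy of $\widehat\Psi$) are not excluded, and then $G$ would not descend to $\widehat{\mathcal S}_4$. Your assertions that the monodromy ``cancels'' and that poles ``can arise only from'' zeros and poles of $h$ are exactly what has to be shown.

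The paper avoids this with the explicit formula $G=h^{-1/2}P_0^{-1}\bigl(\begin{smallmatrix}a_1&-v_2^\#\\ a_2&v_1^\#\end{smallmatrix}\bigr)$. Here $P_0$ and $a=P_0^{\lambda=0}v$ are meromorphic on $\widetilde{\mathcal S_4}$ by the DMPW continuation of the factorization~\eqref{eq:two-circles}. Then $\det P_0=1$ gives $\det G=1$, and on $J$ the formula reduces to $B^{-1}G_{\mathrm u}$. You need this representation, or some additional argument ruling out nontrivial local monodromy, to obtain the meromorphic gauge on $\widehat{\mathcal S}_4$.
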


\begin{proof}
Using the components of $a$ from~\eqref{eq:section-components}
and $h$ from~\eqref{eq:scalar-h}, define
\begin{equation}\label{eq:adapted-positive-gauge}
 G=\frac{1}{\sqrt h}P_0^{-1}
 \begin{pmatrix}
 a_1&-v_2^\#\\
 a_2&v_1^\#
 \end{pmatrix}.
\end{equation}
Equation~\eqref{eq:factorization-on-edge} and analytic continuation
give $\det P_0=1$. Hence
\[
 \det G=\frac{a_1v_1^\#+a_2v_2^\#}{h\,\det P_0}=1.
\]
Since $P_0$ is meromorphic, the analytic continuation of $G$
is meromorphic on $\widehat{\mathcal S}_4$ and has the stated values for
$G^{\lambda=0}$.
Enlarge the preimage of the exceptional set in
Proposition~\ref{prop:continuation} by the zeros and poles of $h$
and the poles of $a_1,a_2,v_1^\#,v_2^\#$.
Outside this discrete set, which is independent of $\lambda$,
formula~\eqref{eq:adapted-positive-gauge} is holomorphic for
$|\lambda|<r$ and invertible because $\det G=1$.
With $k=(k_1,k_2)$ from~\eqref{eq:edge-components}, we have
$G=B^{-1}G_{\mathrm u}$ along $J$, where
\[
 G_{\mathrm u}=\frac{1}{\sqrt h}
 \begin{pmatrix}
 k_1&-\overline{k_2}\\
 k_2&\overline{k_1}
 \end{pmatrix}\in\SU(2).
\]
Thus $\widehat\Psi=G_{\mathrm u}^{-1}F$ is unitary for
$|\lambda|=1$ along $J$.
Recall from~\eqref{eq:kernel-projection} that $\Pi$ is the orthogonal
projection onto $B_0\ker\eta_{-1}$ along $J$.
Since $G_{\mathrm u}^{-1}\Pi G_{\mathrm u}=\operatorname{diag}(1,0)$,
\eqref{eq:meromorphic-reconstruction} gives
\[
 \nu=i(\widehat\Psi^{\lambda=1})^{-1}
 \begin{pmatrix}1&0\\0&-1\end{pmatrix}
 \widehat\Psi^{\lambda=-1}\qquad\text{along }J.
\]
The immersion is recovered by~\eqref{eq:sym-formula}
with $\widehat\Psi$ in place of $F$, since $G_{\mathrm u}$ is independent
of $\lambda$.
Along $J$, the identity $G=B^{-1}G_{\mathrm u}$ gives,
for $0<|\lambda|<r$,
\[
 \dd+\xi^\lambda=(\nabla^\lambda).G_{\mathrm u}.
\]
Since $G_{\mathrm u}$ is independent of
$\lambda$, this expression contains only the powers
$\lambda^{-1},1,\lambda$ and extends to all $\lambda\in \C^*$.

By~\eqref{eq:leading-potential}, its leading coefficient has only
the upper-right entry $\omega=h\,\dd z/p(z)$, which is real
along $J$. Unitarity for $|\lambda|=1$ therefore gives
\[
 \xi^\lambda|_J=\begin{pmatrix}
 i\mu&\lambda^{-1}\omega+b\\
 -\overline b-\lambda\omega&-i\mu
 \end{pmatrix},
\]
where $\mu$ is a real one-form and $b$ a complex one-form.
With $z=x+iy$, let $T_1=f_x/\ell_x$ and $T_2=f_y/\ell_x$ along
$J$.
The parallel-frame equation, with the sign fixed by
$p(z)\ell_x>0$ on $J$, gives
\[
 \dd f=2\omega T_1.
\]
Using the parallel-frame equation for $\widehat\Psi$, we obtain along $J$
\[
 \begin{aligned}
 \dd T_1&=-2\omega f-2\operatorname{Im}(b)\nu+2\mu T_2,\\
 \dd\nu&=2\operatorname{Im}(b)T_1-2\operatorname{Re}(b)T_2.
 \end{aligned}
\]
Since $E$ is a great-circle arc, $\dd T_1=-2\omega f$.
Comparing the $\nu$- and $T_2$-components gives
$\operatorname{Im}(b)=0$ and $\mu=0$.
Thus $b$ is real, and the second equation reduces to $\dd\nu=-2bT_2$.
Comparing with $\dd f=T_1\dd\ell$ and $\dd\nu=T_2\dd\beta$
from~\eqref{eq:circle-normal}, we obtain
$\dd\ell=2\omega$ and $\dd\beta=-2b$.
Set $\vartheta=-b$. These identities hold throughout by
analytic continuation from $J$.

Equation~\eqref{eq:differential-relation} gives
$\omega\vartheta=Q/(2i)$. Substituting $\omega=h\,\dd z/p(z)$
and~\eqref{eq:hopf} yields $\vartheta=q_0\dd z/(2ih)$.
This proves~\eqref{eq:edge-normal-form} with~\eqref{eq:edge-differentials}.
By~\eqref{eq:hopf}, $q_0$ is purely imaginary. Together with $p\in\R[z]$ and $h^\#=h$,
this gives
\[
 \omega^\#=\omega,\qquad \vartheta^\#=\vartheta,
 \qquad (\xi^\lambda)^\#=-\xi^\lambda.
\]

Recall the annulus $W_t=\{\lambda\in\C:r-t<|\lambda|<r+t\}$
from Section~\ref{sec:local-dpw}.
Decrease $t$ so that $r+t<r^{-1}$.
Away from the exceptional points in the surface variable,
the matching boundary values in~\eqref{eq:two-circles} glue
$P_0\Psi$ and $\mathcal F$ holomorphically across $|\lambda|=r$.
Then $\mathcal F\Psi^{-1}$ extends $P_0$ across the same circle.
Thus, locally in the surface variable and after decreasing $t$,
the identity $\Psi=P_0^{-1}\mathcal F$ holds on $W_t$.
Substituting~\eqref{eq:adapted-positive-gauge} and
$\Psi=P_0^{-1}\mathcal F$ from~\eqref{eq:two-circles} into
$\widehat\Psi=G^{-1}\Psi$ gives on $W_t$
\begin{equation}\label{eq:annular-adapted-frame}
 \widehat\Psi
 =G^{-1}P_0^{-1}\mathcal F
 =\sqrt h
 \begin{pmatrix}
 a_1&-v_2^\#\\
 a_2&v_1^\#
 \end{pmatrix}^{-1}\mathcal F.
\end{equation}
The factor multiplying $\mathcal F$ is independent of $\lambda$,
while $\mathcal F$ is holomorphic for $r<|\lambda|<r^{-1}$.
The right-hand side therefore gives the analytic continuation of
$\widehat\Psi$ to this annulus, holomorphic in $\lambda$ and defined
at the Sym points $\lambda=\pm1$.
\end{proof}

\begin{remark}\label{rem:h-circle-coordinate}
The identity~\eqref{eq:edge-differentials} also gives the fixed
meromorphic relation
\begin{equation}\label{eq:h-circle-coordinate}
 h=\frac{p(z)}{2i}\frac{C'(z)}{C(z)}.
\end{equation}
\end{remark}

Recall from Section~\ref{sec:lawson} that the point $x_1\in\Sigma$
satisfies $f(x_1)=Q_1$ and $\pi(x_1)=p_1$.
It is an umbilic of order $n_1-2$.

\begin{lemma}\label{lem:vertex}
There is a conformal coordinate $w$ centered at $x_1$, real along
$E$, in which the initial branches of $C$ and $N$ extend
holomorphically, with $C(0)N(0)\ne0$, and
\begin{equation}\label{eq:vertex}
 z-p_1=\kappa_0 w^{n_1}+O(w^{n_1+1}),\qquad
 C-C(0)=\kappa_1w+O(w^2),
\end{equation}
for some $\kappa_0,\kappa_1\in\C\setminus\{0\}$. Moreover,
\[
 \ord_{w=0}(\dd\log N)=n_1-2.
\]
\end{lemma}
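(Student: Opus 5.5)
The plan is to work directly on the compact surface $\Sigma$ near the vertex $x_1$, rather than on $\mathcal S_4$. Since $f$ is a real analytic embedding near $x_1$ and $E$ is a real analytic geodesic arc ending at $Q_1=f(x_1)$, Schwarz reflection across the great circle containing $E$ acts on $\Sigma$ as an antiholomorphic involution fixing $x_1$. Its fixed set near $x_1$ is the analytic curve containing $E$. We therefore choose a conformal coordinate $w$ centered at $x_1$ that conjugates this involution to $w\mapsto\overline w$. Then $w$ is real along $E$, and $E$ corresponds to a segment of the positive real axis, say.

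The map $\pi$ has local degree $n_1$ at $x_1$, and $z$ is compatible with reflection: $\delta(z)=\overline z$, $p_1\in\R$, and $\pi$ intertwines the two involutions. Hence $z\circ\pi-p_1=w^{n_1}\cdot(\text{unit})$, with real Taylor coefficients, which gives the first expansion in~\eqref{eq:vertex} with $\kappa_0\neq0$.

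Next we extend $C,N$. Along $E$, the formulas~\eqref{eq:linear-circle-coordinates} express $C,N$ as complex-linear combinations of the components of $f$ and $\nu$. These are real analytic on all of $\Sigma$, and in particular near $x_1$. Along the real segment, complexifying in $w$ (replacing $f(w)$ by the holomorphic extension of the real analytic restriction $f|_{\R}$) gives holomorphic functions of $w$ on a disc around $0$. These agree with the initial branches of $C$ and $N$ on $E$, so by the identity theorem they are the continuations of those branches. Both are nonvanishing at $0$, because $|C|=|N|=1$ on the real axis, including at $w=0$.

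For the derivative of $C$, the arclength $\ell$ along $E$ is a real analytic function of the real parameter $w$. The immersion $f$ is regular at $x_1$, so $\ell_w(0)\neq0$ in the coordinate $w$ along the real axis; an umbilic is not a branch point of $f$. Hence $\dd\log C=i\,\dd\ell$ has no zero at $w=0$, which gives the second expansion in~\eqref{eq:vertex} with $\kappa_1\neq0$.

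For the order of $\dd\log N$, pull back~\eqref{eq:differential-relation} to the $w$-disc. The right-hand side $2iQ$ pulls back to the Hopf differential on $\Sigma$ (up to the constant $2i$). This has order exactly $n_1-2$ at $x_1$, as computed at the end of Section~\ref{sec:lawson}: the order is $n_1\cdot(-1)+2n_1-2$. Since $\dd\log C$ has order $0$ at $w=0$, we get $\ord_{w=0}\dd\log N=n_1-2$.

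The main obstacle is justifying that $\dd\ell/\dd w\ne0$ at the vertex and that the holomorphic extensions of $f|_E,\nu|_E$ across $w=0$ coincide with the globally continued branches from Proposition~\ref{prop:continuation}. The first follows from $|f_w|$ being the nonzero conformal factor. The second holds because both continuations agree on an open subarc of $E$ near $z_0$, and the connected segment of $E$ from $z_0$ to $x_1$ lifts to a path in $\widetilde{\mathcal S_4}$ along which the continuation is unique.
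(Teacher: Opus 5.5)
Your proposal is correct and follows essentially the same route as the paper: a coordinate $w$ in which the lifted reflection at $x_1$ is complex conjugation, the local degree $n_1$ of $\pi$ for the first expansion, and holomorphic extension of $C,N$ across $w=0$ with $C'(0)\neq0$ because $f$ is an immersion. The order of $\dd\log N$ then comes, as in the paper, from the order $n_1-2$ of $\pi^*Q$ and the relation $\dd\log C\,\dd\log N=2iQ$; extending $C,N$ through the linear expressions in the components of $f,\nu$ instead of through the angle functions $\ell,\beta$ is only a cosmetic difference.
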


\begin{proof}
The lift to $\Sigma$ of the reflection $\delta$
in~\eqref{eq:reflection} fixing $E$ also fixes $x_1$.
Choose a conformal coordinate $w$ centered at $x_1$ in which $\delta$ acts by complex conjugation. Then $w$ is real along $E$.
Since $\pi$ has
local degree $n_1$ at $x_1$, the first expansion
in~\eqref{eq:vertex} follows.

The immersion $f$ and its unit normal $\nu$ are real analytic at
$x_1$. Along $E$, $f$ takes values in the great circle spanned by
$e_0,e_1$, while $\nu$ takes values in the great circle spanned by
$e_2,e_3$, as in~\eqref{eq:circle-normal}. By real analyticity, their restrictions
to the real $w$-axis remain in these circles through $w=0$.
Thus the angle functions $\ell$ and $\beta$
extend real analytically through $w=0$. Their holomorphic
extensions give $C=e^{i\ell}$ and $N=e^{i\beta}$ with
$C(0)N(0)\ne0$. \textcolor{black}{Since $f$ is an immersion, differentiating
\eqref{eq:circle-normal} along the real $w$-axis gives
$C'(0)=iC(0)\ell'(0)\ne0$, proving the second expansion
in~\eqref{eq:vertex}.}

The differential $Q$ has a simple pole at $p_1$, so its pullback
under $\pi$ has order $2(n_1-1)-n_1=n_1-2$ at $x_1$.
Since $C(0)$ and $C'(0)$ are nonzero,
$\dd\log C=(C'/C)\dd w$ has order zero at $w=0$.
Using~\eqref{eq:differential-relation}, we obtain
\[
 \begin{aligned}
 \ord_{w=0}(\dd\log N)
 &=\ord_{w=0}(\pi^*Q)-\ord_{w=0}(\dd\log C)\\
 &=n_1-2.
 \end{aligned}\qedhere
\]
\end{proof}

\subsection{Algebraicity of the surface and an algebraic relation}
\label{sec:algebraic-relation}

We next show that algebraicity of the surface gives a polynomial
relation between $C$ and $N$. The argument uses the fact that the
surface is locally a graph after stereographic projection.

\begin{proposition}\label{prop:surface-relation}
Let $S\subset\Sphere^3$ be an embedded real analytic surface containing
an open arc of a great circle. Let $C$ and $N$ be the corresponding
circle coordinates, as in Definition~\ref{def:functions}. If $S$ is algebraic, then $N$ is
algebraic over $\C(C)$. In other words, there is
a nonzero polynomial $A\in\C[T_1,T_2]$ such that
\[
 A(C,N)=0.
\]
\end{proposition}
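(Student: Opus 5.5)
The plan is to show that the graph functions are algebraic over \C(C), then deduce the same for N. Locally near the arc, S is a graph after a suitable projection. In that chart, every point of S and its unit normal are algebraic expressions in the two graph coordinates, the height function and its first partial derivatives. One would then show that, along the arc, these data are functions of C alone, each algebraic over \C(C).

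\textbf{Step 1: coordinates.} Work in the orthonormal basis $(e_0,e_1,e_2,e_3)$ chosen in Section~\ref{sec:continuation}, so the arc lies in the circle spanned by $e_0,e_1$. Use stereographic projection from a point off $S$ (say $-e_0$, after shrinking the arc). This sends the arc to a segment of a line $L$ in $\R^3$ and $S$ to an embedded analytic surface $S'$. The map is rational with rational inverse, so $S'$ is contained in the zero set of a polynomial $P'$ not vanishing identically on $\R^3$.

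Near the arc, write $S'$ as a graph $x_3=\phi(x_1,x_2)$ over the plane spanned by $L$ and the tangent direction $T_2$ (the direction of $f_y$), with $\phi$ real analytic and $\phi(x_1,0)=0$ along $L$. Set $u(x_1)=\partial_{x_2}\phi(x_1,0)$. The direction of the normal along the arc is then determined rationally by $x_1$ and $u(x_1)$.

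\textbf{Step 2: the key algebraic step.} This is the step I expect to be the main obstacle. Expand
\[
 P'(x_1,x_2,\phi(x_1,x_2))=0
\]
in powers of $x_2$. Write $P'=\sum_k P_k(x_1,x_2)x_3^k$ and reduce $P'$ by the factor $x_2$ or $x_3$ as often as possible; $P'$ cannot be divisible by a power of both indefinitely. More precisely, consider
\[
 R(x_1,t)=\lim_{s\to0}s^{-m}P'(x_1,s,st),
\]
the lowest-order homogeneous part of $P'$ in $(x_2,x_3)$, which is a nonzero polynomial. Since $\phi(x_1,s)=s\,u(x_1)+O(s^2)$, we get $R(x_1,u(x_1))=0$.

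If $R(x_1,\cdot)$ vanishes identically, then $R$ is a nonzero polynomial in $x_1$ alone, forcing $x_1$ to be constant. This contradicts the arc being open. Hence $u$ is algebraic over $\R(x_1)$. This step needs care to make sure that the leading form is genuinely a nontrivial polynomial relation.

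\textbf{Step 3: translate to $C,N$.} Along the arc, $x_1$ is a rational function of $\cos\ell$ and $\sin\ell$, that is, of $C$ and $C^{-1}$. The unit normal $\nu=\cos\beta\,e_2+\sin\beta\,e_3$ is obtained from the pushforward of the normal $(-\partial\phi,1)$ by the differential of the inverse stereographic projection, followed by normalization. Hence $\cos\beta$ and $\sin\beta$ lie in an algebraic extension of $\C(C,u)$, and therefore $N=\cos\beta+i\sin\beta$ is algebraic over $\C(C)$. Taking a minimal polynomial and clearing denominators gives a nonzero $A\in\C[T_1,T_2]$ with $A(C,N)=0$ on $J$, and by analytic continuation on the domain of the germs.
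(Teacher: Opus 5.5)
Your proposal is correct and follows essentially the same route as the paper. The paper also projects stereographically from a point of the great circle off the arc, takes the lowest-order homogeneous part in the two transverse variables to get an algebraic equation for $\chi_y(x,0)$ over $\C(x)$, and then uses that $x$ and the normal depend rationally (up to a square root) on $C$ and this slope.

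One phrasing in Step~2 should be tidied. The reason $R(x_1,t)=P'_m(x_1,1,t)$ is nonzero is simply that dehomogenizing a nonzero form in $(x_2,x_3)$ gives a nonzero polynomial. Also, the projection point only needs to avoid the arc, not all of $S$.
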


\begin{proof}
Let $S$ be an algebraic surface containing an open arc of a great
circle, and let $P_S\in\R[X_1,X_2,X_3,X_4]$ vanish on $S$ but not
identically on $\Sphere^3$. Stereographic projection from a point of the great circle outside
the arc maps the circle to a straight line. After a Euclidean
change of coordinates and restriction to a smaller arc, the image
of $S$ is a real analytic graph
\[
 \textcolor{black}{z}=\chi(x,y),\qquad \chi(x,0)=0,
\]
with the arc on the $x$-axis. Let $\varphi$ denote the inverse
stereographic projection in these coordinates. The function
$P_S\circ\varphi$ is rational. Multiplying $P_S\circ\varphi$ by its polynomial denominator
gives a nonzero polynomial $R\in\R[x,y,\textcolor{black}{z}]$ satisfying
$R(x,y,\chi(x,y))=0$.

Write $R=\sum_{j=m}^d R_j$, where $R_j$ is homogeneous of degree
$j$ in $(y,\textcolor{black}{z})$ and $R_m\ne0$. Since $R(x,0,0)=0$, we have
$m\ge1$. Set $a(x)=\chi_y(x,0)$. Substituting
$\chi(x,y)=a(x)y+O(y^2)$ and comparing the coefficient of $y^m$ gives
\[
 R_m(x,1,a(x))=0.
\]
This is a nontrivial polynomial equation for $a(x)$: since $R_m$
is nonzero and homogeneous in $(y,\textcolor{black}{z})$, the polynomial $R_m(x,1,T)$
is nonzero. Thus $a$ is algebraic over $\C(x)$.
The Euclidean unit normal along the arc is
\[
 \nu_{\mathrm E}(x)=\pm\frac{(0,-a(x),1)}{\sqrt{1+a(x)^2}},
\]
so its components are algebraic over $\C(x)$ as well.

Write $\varphi^*g_{\Sphere^3}=\rho^2g_{\R^3}$.
Both $\varphi$ and $\rho>0$ are rational, and the spherical unit
normal is
\[
 \nu=\rho^{-1}\dd\varphi(\nu_{\mathrm E}).
\]
Thus the components of $\nu$ along the arc, and in particular
$N=\langle\nu,e_2\rangle+i\langle\nu,e_3\rangle$, are algebraic
over $\C(x)$. On the great circle, stereographic projection
expresses $x$ as a nonconstant rational function of $C$, since
\[
 \cos\ell=\frac{C+C^{-1}}{2},\qquad
 \sin\ell=\frac{C-C^{-1}}{2i}.
\]
Substituting this expression for $x$ into a nonzero polynomial equation
for $N$ over $\C(x)$ gives an equation whose coefficients are rational
functions of $C$. Multiplying this equation by the product of the
denominators of these coefficients gives
$A(C,N)=0$ with $A\in\C[T_1,T_2]$. The polynomial $A$ is nonzero
because $x$ is a nonconstant rational function of $C$.
Since $C$ and $N$ are holomorphic and $A(C,N)$ vanishes on an open
subarc, the same identity holds for their holomorphic germs.
\end{proof}

\section{Monodromy}\label{sec:monodromy}

In this section we complete the proof of the Main Theorem by studying
the analytic continuation of the circle coordinates $C,N$. We first
establish a reflection principle relating this analytic continuation
to the monodromy of the associated family. We then show that a
nontrivial polynomial relation between $C$ and $N$ would force the
pair to have only finitely many branches.

For $\gamma\in\pi_1(\mathcal S_4,z_0)$, write $C^\gamma,N^\gamma$
for the germs obtained by analytic continuation along $\gamma$.
With the composition convention of Section~\ref{sec:associated-family},
we write $\gamma\rho=\gamma\circ\rho$, so that we first go along $\rho$ and then $\gamma$. Thus continuation of a germ $u$ satisfies
\[
 u^{\gamma\rho}=(u^\rho)^\gamma.
\]
Recall from Remark~\ref{rem:potential-monodromy} that
\begin{equation}\label{eq:monodromy}
 \gamma^*\Psi^\lambda=\Psi^\lambda M_\gamma^\lambda
 \qquad(0<|\lambda|<r_0).
\end{equation}
With the composition convention above, the monodromy representation satisfies
\[
 M_{\gamma\rho}^\lambda=M_\gamma^\lambda M_\rho^\lambda,\qquad
 M_{\gamma^{-1}}^\lambda=(M_\gamma^\lambda)^{-1}.
\]
We omit the superscript $\lambda$ unless a particular parameter value
is specified.

Recall the antiholomorphic involution $\delta(z)=\overline z$ of
$\mathcal S_4$ induced by Schwarz reflection across $E$;
see~\eqref{eq:reflection}.
Since $\delta(\mathcal S_4)=\mathcal S_4$ and $\delta(z_0)=z_0$, reflection sends a
loop $\gamma$ based at $z_0$ to the loop $\delta\circ\gamma$, also based at $z_0$.
Thus the homotopy class $\delta\gamma:=[\delta\circ\gamma]$ is well-defined.
The reflection by $\delta$ also preserves the order in which we follow the two loops.
Hence
\[
 \delta(\gamma\rho)=(\delta\gamma)(\delta\rho),
 \qquad \delta(\gamma^{-1})=(\delta\gamma)^{-1}.
\]
Since $\delta^2=\mathrm{Id}$, this defines an involutive
automorphism of $\pi_1(\mathcal S_4,z_0)$, again denoted by $\delta$.

By Proposition~\ref{prop:continuation}, we can regard $C$ and $N$ as
meromorphic functions on the universal cover $\widetilde{\mathcal S_4}$.
Consider two lifts $\widetilde z_0$ and $\widetilde z_0'$ of
$z_0\in\mathcal S_4$ to the universal cover $\widetilde{\mathcal S_4}$,
with $\widetilde z_0$ the lift fixed in Section~\ref{sec:continuation}.
Composing $C$ and $N$ with the local inverse of the covering map,
chosen to send $z_0$ to $\widetilde z_0'$, gives a branch of
$(C,N)$ at $z_0$. If a loop $\rho$ based at $z_0$ lifts to a path
from $\widetilde z_0$ to $\widetilde z_0'$, this branch is
$(C^\rho,N^\rho)$.

\begin{definition}\label{def:finite-monodromy}
We say the pair $(C,N)$ has \emph{finite monodromy} if the set of germs
\[
 \bigl\{(C^\gamma,N^\gamma):\gamma\in\pi_1(\mathcal S_4,z_0)\bigr\}
\]
at $z_0$ is finite.
\end{definition}

\subsection{The monodromy reflection principle}\label{sec:reflection}
Recall the generating loops $\gamma_1,\ldots,\gamma_4$
\textcolor{black}{of $\pi_1(\mathcal S_4,z_0)$} from
Section~\ref{sec:global-dpw}. Since $\delta$ fixes $I$ pointwise
and reverses the orientation of a small circle centered at $p_1$,
we can choose $\gamma_1$ so that
\begin{equation}\label{eq:reflected-generator}
 \delta\gamma_1=\gamma_1^{-1}.
\end{equation}

A non-totally umbilic CMC immersion in $\R^3$ with an umbilic
has trivial dressing isotropy~\cite[Theorem~1.5]{DorfmeisterHaak}. The following related local lemma will be applied at the
umbilic $x_1$ with $f(x_1)=Q_1$ and $\pi(x_1)=p_1$.

\begin{lemma}\label{lem:positive-rigidity}
Let $a=a(z)$ and $b=b(z)$ be holomorphic functions on a disk
centered at $z=0$, with
$a(0)\ne0$, $b(0)=0$, and $b\not\equiv0$. Put
\[
 \mathcal A^\lambda=\begin{pmatrix}
 0&\lambda^{-1}a-b\\
 b-\lambda a&0
 \end{pmatrix}.
\]
Let $r>0$ and $0<t<r$. Suppose that $Z(z,\lambda)\in\SL(2,\C)$
is holomorphic for $z$ in this disk and $\lambda$ in the annulus $W_t$
from Section~\ref{sec:local-dpw}, and satisfies
\[
 \partial_zZ=[Z,\mathcal A^\lambda].
\]
If, for $z$ in a nonempty real interval in the disk, $Z$ extends
holomorphically to $|\lambda|<r$ with
\[
 Z(z,0)=\begin{pmatrix}1&*\\0&1\end{pmatrix},
\]
then $Z\equiv\mathrm{Id}$.
\end{lemma}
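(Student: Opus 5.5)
The plan is to expand $Z$ in powers of $\lambda$ and show, one coefficient at a time, that every coefficient of $Z-\mathrm{Id}$ vanishes. The hypothesis $b(0)=0$ enters at a single step, where it kills a constant of integration.

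\textbf{Step 1: power series on the whole disk.} For each $z$ in the disk, write the Laurent expansion $Z(z,\lambda)=\sum_{n\in\mathbb Z}Z_n(z)\lambda^n$ on $W_t$. Each coefficient is a contour integral over $|\lambda|=r$, so it is holomorphic in $z$ on the whole disk. On the real interval, $Z$ extends holomorphically to $|\lambda|<r$, so $Z_n=0$ there for $n<0$. By the identity theorem $Z_n\equiv0$ for $n<0$ on the whole disk. Hence $Z=\sum_{n\ge0}Z_n\lambda^n$ everywhere, and $Z_0=\mathrm{Id}+cE_{12}$ on the interval. Write
\[
 \mathcal A^\lambda=\lambda^{-1}aE_{12}+A_0-\lambda aE_{21},\qquad A_0=b(E_{21}-E_{12}).
\]
Comparing the coefficients of $\lambda^{m}$ in $\partial_zZ=[Z,\mathcal A^\lambda]$ gives
\[
 Z_m'=a[Z_{m+1},E_{12}]+[Z_m,A_0]-a[Z_{m-1},E_{21}].
\]
At order $\lambda^{-1}$ this reads $[Z_0,E_{12}]=0$. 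Hence $Z_0=s\,\mathrm{Id}+cE_{12}$, with $s$ holomorphic, and $s\equiv1$ because $s=1$ on the interval.

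\textbf{Step 2: the base case $c\equiv0$.} Write $Z_1=\begin{pmatrix}\alpha&\beta\\\gamma&\delta\end{pmatrix}$. We have
\[
 [Z_1,E_{12}]=\begin{pmatrix}-\gamma&\alpha-\delta\\0&\gamma\end{pmatrix},\qquad [cE_{12},A_0]=cb\,H,\quad H=\operatorname{diag}(1,-1).
\]
The order-$\lambda^0$ equation then gives $c'=a(\alpha-\delta)$ and $a\gamma=cb$. Next take the lower-left entry of the order-$\lambda^{1}$ equation. The term $a[Z_2,E_{12}]$ has no lower-left entry, and $[Z_0,E_{21}]=cH$ is diagonal. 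Hence
\[
 \gamma'=([Z_1,A_0])_{21}=-b(\alpha-\delta)=-\frac{b}{a}\,c'.
\]
Substituting $\gamma=cb/a$ gives $2c'(b/a)+c(b/a)'=0$. Multiplying by $c$ yields $\bigl(c^2b/a\bigr)'=0$, so $c^2b/a$ is constant. Its value at $z=0$ is $0$ because $b(0)=0$ and $a(0)\ne0$. Since $b\not\equiv0$, we conclude $c\equiv0$, so $Z_0=\mathrm{Id}$.

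\textbf{Step 3: induction.} Suppose $Z\ne\mathrm{Id}$, and let $n\ge1$ be minimal with $Z_n\ne0$. The order-$\lambda^{n-1}$ equation reduces to $0=a[Z_n,E_{12}]$. Since $a\ne0$ near $0$, this gives $Z_n=s\,\mathrm{Id}+cE_{12}$ there. Expanding $\det Z=1$ to first order in $\lambda^n$ gives $\operatorname{tr}Z_n=0$, hence $s=0$ and $Z_n=cE_{12}$. The next two orders, $\lambda^{n}$ and $\lambda^{n+1}$, have exactly the same shape as in Step 2, with $Z_0-\mathrm{Id},Z_1,Z_2$ replaced by $Z_n,Z_{n+1},Z_{n+2}$. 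The lower-order coefficients are either $\mathrm{Id}$, which commutes with everything, or zero, and $Z_{n-1}$ enters only through $[Z_{n-1},E_{21}]$, which is diagonal. The same computation therefore forces $c\equiv0$, which contradicts $Z_n\neq0$. Hence $Z\equiv\mathrm{Id}$ on the disk, and on $W_t$ by uniqueness of the expansion.

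The main obstacle is Step 2: choosing the lower-left entry of the next order, so that $\gamma$ and $c$ combine into the exact derivative $(c^2b/a)'$. This exact derivative is what lets the umbilic condition $b(0)=0$ rule out a nonzero integration constant. The remaining bookkeeping, namely checking that the determinant expansion and the lower-order terms behave as claimed in the induction, is routine.
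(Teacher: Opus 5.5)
Your proof is correct and is essentially the paper's argument. It uses the same Laurent expansion with the identity theorem, the same use of the $\lambda^{n-1}$ equation and $\det Z=1$ to reduce the leading coefficient of $Z-\mathrm{Id}$ to $cE_{12}$, and the same three entries: your $a\gamma=cb$, $c'=a(\alpha-\delta)$, $\gamma'=-b(\alpha-\delta)$ are the paper's $ac=b\varphi$, $ad=\varphi'$, $c'=-bd$, combining into $(c^2b/a)'=0$. The only difference is organizational: the paper handles your base case and inductive step at once by taking the lowest nonzero coefficient $Z_m$, $m\ge0$, of $Z-\mathrm{Id}$. One small slip in your Step 3 bookkeeping: at order $\lambda^{n+1}$ it is $[Z_n,E_{21}]=cH$ that is diagonal, while $[Z_{n-1},E_{21}]=0$.
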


\begin{proof}
The Laurent coefficients of $Z$ in $\lambda$ are holomorphic
functions of $z$. On the given real interval, all coefficients of
negative powers of $\lambda$ vanish; the constant coefficient has
diagonal entries $1$ and lower-left entry $0$.
The identity theorem extends these identities to every $z$ in the
disk centered at $z=0$. Thus, for every such $z$, $Z(z,\lambda)$
extends holomorphically to $|\lambda|<r$, with
$Z(z,0)=\begin{pmatrix}1&*\\0&1\end{pmatrix}$.
We work near $z=0$, where $a$ has no zeros.
If $Z\ne\mathrm{Id}$, write
\[
 Z-\mathrm{Id}=\sum_{j\geq m}Z_j\lambda^j,
 \qquad m\geq0,\qquad Z_m\not\equiv0.
\]
The condition at $\lambda=0$ gives $\operatorname{tr}Z_m=0$
if $m=0$; if $m>0$, this follows from $\det Z=1$.
Write $Z_m=\begin{pmatrix}u&\varphi\\\psi&-u\end{pmatrix}$.
Comparison of the coefficients of $\lambda^{m-1}$ gives
\[
 0=\left[Z_m,\begin{pmatrix}0&a\\0&0\end{pmatrix}\right]
 =a\begin{pmatrix}-\psi&2u\\0&\psi\end{pmatrix}.
\]
Since $a\ne0$, we have $u=\psi=0$. Hence
\[
 Z_m=\begin{pmatrix}0&\varphi\\0&0\end{pmatrix}
\]
for a holomorphic function $\varphi=\varphi(z)$. Set
\[
 c=(Z_{m+1})_{21},\qquad
 d=(Z_{m+1})_{11}-(Z_{m+1})_{22}.
\]
The diagonal and upper-right entries in degree $m$, and the
lower-left entry in degree $m+1$, give
\[
 ac=b\varphi,\qquad ad=\varphi',\qquad c'=-bd.
\]
Thus $c=(b/a)\varphi$ and $c'=-(b/a)\varphi'$, so
\[
 \left(\frac ba \varphi^2\right)'=0.
\]
Thus $(b/a)\varphi^2$ is constant in $z$. Since $a(0)\ne0$
and $b(0)=0$, its value at $z=0$ is zero, so
$(b/a)\varphi^2\equiv0$. As $b\not\equiv0$, we obtain
$\varphi\equiv0$, contradicting the choice of $m$.
\end{proof}

Similar computations were also conducted in~\cite[Section~3.1]{DorfmeisterKilian}.
We use Lemma~\ref{lem:positive-rigidity} to prove the following reflection identity.
\par

\begin{proposition}[Reflection principle]\label{prop:reflection}
If analytic continuation along a loop
$\gamma\in\pi_1(\mathcal S_4,z_0)$ leaves both $C$ and $N$
unchanged, that is,
\[
 C^\gamma=C,\qquad N^\gamma=N,
\]
then
\begin{equation}\label{eq:reflection-monodromy}
 M_\gamma=M_{\delta\gamma}
\end{equation}
as holomorphic $\SL(2,\C)$-valued functions of $\lambda\in\C^*$.
\end{proposition}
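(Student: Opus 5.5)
We plan to compare two parallel frames of the normal-form connection $\dd+\xi^\lambda$ from Corollary~\ref{cor:edge-normal-form}: the continuation of $\widehat\Psi$ along $\gamma$, and the reflected frame built from $\widehat\Psi$ with the operation $\#$. The key input is that $\xi^\lambda$ is built only from $\omega=\frac1{2i}\dd\log C$ and $\vartheta=\frac1{2i}\dd\log N$, by~\eqref{eq:edge-differentials}. So if $C^\gamma=C$ and $N^\gamma=N$, then $\xi^\lambda$ is unchanged by continuation along $\gamma$. The same holds for $h=\frac{p}{2i}C'/C$ by~\eqref{eq:h-circle-coordinate}, and hence for $\sqrt h$ up to a sign. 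We may have to pass once more around $\gamma$ to fix the sign of $\sqrt h$, or check that the sign is irrelevant because $G$ enters only through $\xi$.

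\textbf{Step 1: the two frames.} Near $z_0$, the germ $\gamma^*\widehat\Psi$ is again a parallel frame of $\dd+\xi^\lambda$. Hence $\gamma^*\widehat\Psi=\widehat\Psi\,X_\gamma$ for some $X_\gamma$ that is constant in $z$ and holomorphic in $\lambda$ on the annulus $r<|\lambda|<r^{-1}$. Next we relate $X_\gamma$ to $M_\gamma$. We have $\widehat\Psi=G^{-1}\Psi$ and $\gamma^*\Psi=\Psi M_\gamma$. Continuing along $\gamma$ replaces $G$ by $G^\gamma$, so
\[
 \widehat\Psi X_\gamma=(G^\gamma)^{-1}\Psi M_\gamma .
\]
Set $Z=G^{-1}G^\gamma$. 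This is a positive gauge, holomorphic for $|\lambda|<r$, and it satisfies $\widehat\Psi X_\gamma M_\gamma^{-1}\widehat\Psi^{-1}=Z^{-1}$. Because both $G$ and $G^\gamma$ gauge $\eta$ to the same $\xi$, $Z$ satisfies $\dd Z=[Z,\xi]$. Its value at $\lambda=0$ is unipotent upper triangular. Indeed, the first columns of $G^{\lambda=0}$ and $(G^{\gamma})^{\lambda=0}$ are $v/\sqrt h$, and $v=\sqrt{c_*}(1,z)$ is single-valued. On $|\lambda|=r$ the matrix $Z^{-1}$ is conjugate, via $\widehat\Psi$, to the constant $X_\gamma M_\gamma^{-1}$, so it extends over the annulus.

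\textbf{Step 2: reflection.} Using $(\xi^\lambda)^\#=-\xi^\lambda$, the matrix $(\widehat\Psi^\#)^{-1}$ is also a parallel frame. On $J$ and for $|\lambda|=1$ it equals $\widehat\Psi$, by unitarity, so $(\widehat\Psi^\#)^{-1}=\widehat\Psi$ identically. Applying $\#$ to $\gamma^*\widehat\Psi=\widehat\Psi X_\gamma$ converts continuation along $\gamma$ into continuation along $\delta\gamma$. This gives $(\delta\gamma)^*\widehat\Psi=\widehat\Psi (X_\gamma^\#)^{-1}$, where $\#$ acts on constants by $\lambda\mapsto1/\overline\lambda$ and Hermitian adjoint. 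Running the Step 1 argument on $\delta\gamma$ would need $C^{\delta\gamma}=C$ and $N^{\delta\gamma}=N$. These follow from~\eqref{eq:scalar-reflection}, since $C^{\delta\gamma}=((C^\#)^{\gamma})^\#$.

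\textbf{Step 3: conclude, using the main lemma.} We want to show $X_\gamma=M_\gamma$. The plan is to apply Lemma~\ref{lem:positive-rigidity} to $Z$, with $a$ and $b$ the coefficient functions of $\omega=a\,\dd w$ and $\vartheta=b\,\dd w$ in the vertex coordinate $w$ at $x_1$ from Lemma~\ref{lem:vertex}. There $a(0)\ne0$ because $\dd\log C$ has order $0$, while $b(0)=0$ and $b\not\equiv0$ because $\dd\log N$ has order $n_1-2>0$. The lemma then gives $Z\equiv\mathrm{Id}$, hence $X_\gamma=M_\gamma$. The same argument gives $X_{\delta\gamma}=M_{\delta\gamma}$.

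It remains to show $X_{\delta\gamma}=X_\gamma$. Since $\widehat\Psi$ is unitary on $J$ for $|\lambda|=1$, $X_\gamma$ is unitary there, so $(X_\gamma^\#)^{-1}=X_\gamma$. Together with Step 2 this yields $M_{\delta\gamma}=M_\gamma$ on the annulus, and then on all of $\C^*$ by holomorphicity.

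\textbf{Main obstacle.} The hardest point is justifying the use of Lemma~\ref{lem:positive-rigidity} at the umbilic. $Z$ is defined near $z_0$ on $J$, but the lemma needs a disk around the vertex on which $Z$ is holomorphic in both variables, and $G$ is only meromorphic on $\widehat{\mathcal S}_4$. We would try to continue $Z$ along $J$ toward $p_1$ and pass to the coordinate $w$, where $\xi$ extends holomorphically. We would also use that $Z$ is governed by the ODE $\dd Z=[Z,\xi]$, so it extends wherever $\xi$ is holomorphic. This handles the poles of $G$, since the required conditions are stated only on a real interval.
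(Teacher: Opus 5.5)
Your architecture is the paper's: the normal form $\xi^\lambda$ is invariant under $\gamma$, the monodromies of $\widehat\Psi$ and $\Psi$ are compared through $(\gamma^*G)^{-1}G$, Lemma~\ref{lem:positive-rigidity} is applied at the umbilic $x_1$ after moving the basepoint along $I$, and the proof closes with $\widehat\Psi\widehat\Psi^\#=\mathrm{Id}$. However, the sign of $\sqrt h$ is a genuine gap.

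Your claim that $Z^{\lambda=0}$ is unipotent is false in general. The vector $v$ is single-valued, but the first column of $(G^\gamma)^{\lambda=0}$ is $v/(\sqrt h)^\gamma=\epsilon v/\sqrt h$ with $\epsilon\in\{1,-1\}$. Hence $Z^{\lambda=0}=\begin{pmatrix}\epsilon&*\\0&\epsilon\end{pmatrix}$, and the lemma, applied to $\epsilon Z$, only yields $X_\gamma=\epsilon_\gamma M_\gamma$. Likewise you get $X_{\delta\gamma}=\epsilon_{\delta\gamma}M_{\delta\gamma}$, so your Step~2 gives only $M_\gamma=\pm M_{\delta\gamma}$, while the statement asserts exact equality.

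Neither suggested repair works:
\begin{itemize}
\item The sign is not irrelevant: it is exactly what links $X_\gamma$ to $M_\gamma$.
\item Running the argument for $\gamma^2$ gives only $M_\gamma^2=M_{\delta\gamma}^2$, which in $\SU(2)$ does not imply $M_\gamma=M_{\delta\gamma}$.
\end{itemize}
The missing idea is that the two signs coincide. The chosen branch of $\sqrt h$ is positive on $J$ and $h^\#=h$, so $(\sqrt h)^\#=\sqrt h$ as germs at $z_0$. Hence
\[
 (\sqrt h)^{\delta\gamma}=\bigl((\sqrt h)^\gamma\bigr)^\#=\epsilon\sqrt h,
\]
so $\epsilon_{\delta\gamma}=\epsilon_\gamma$ and the sign cancels. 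This is equation~\eqref{eq:reflected-square-root} in the paper.

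A smaller point concerns your justification that $X_\gamma$ is unitary on $S^1$. It does not follow from unitarity of $\widehat\Psi$ on $J$: on $J$ with $|\lambda|=1$ one only gets $(\gamma^*\widehat\Psi)^*(\gamma^*\widehat\Psi)=X_\gamma^*X_\gamma$. The continued frame is therefore unitary there only if $X_\gamma$ already is. Instead, derive unitarity from $X_\gamma=\epsilon M_\gamma$ together with Lemma~\ref{thm:monodromy-conditions}(1), as the paper does. With these two corrections your argument coincides with the paper's proof.
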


\begin{proof}
Let $G$ and $\xi^\lambda$ be the gauge and potential from
Corollary~\ref{cor:edge-normal-form}.
\textcolor{black}{We use Lemma~\ref{lem:positive-rigidity} to show that
$G$ has monodromy $\epsilon\mathrm{Id}$ along $\gamma$, for some
$\epsilon\in\{1,-1\}$, and the same monodromy along $\delta\gamma$.
The corresponding monodromies of $\dd+\xi^\lambda$ and
$\nabla^\lambda$ therefore differ by $\epsilon$ along both loops.}

Recall that $p(z)=\prod_{j=1}^4(z-p_j)$ and, by
\eqref{eq:h-circle-coordinate},
\[
 h=\frac{p(z)}{2i}\frac{C'(z)}{C(z)}.
\]
Since $p$ is single-valued, $C^\gamma=C$ gives $h^\gamma=h$.
By~\eqref{eq:edge-differentials} and~\eqref{eq:edge-normal-form},
the identities $C^\gamma=C$ and $N^\gamma=N$ also give
$\gamma^*\xi^\lambda=\xi^\lambda$.

By definition,
$u^\#(z)=\overline{u(\delta z)}$. As $z$ follows
$\delta\gamma$, the point $\delta z$ follows $\gamma$, since
$\delta^2=\mathrm{Id}$. Therefore
\[
 (C^\#)^{\delta\gamma}=(C^\gamma)^\#=C^\#,
\]
where the second equality uses $C^\gamma=C$.
Since $C^\#=C^{-1}$, this gives
$(C^{\delta\gamma})^{-1}=C^{-1}$ and hence
$C^{\delta\gamma}=C$. The same argument gives
$N^{\delta\gamma}=N$.

Write $(\sqrt h)^\gamma=\epsilon\sqrt h$, where
$\epsilon\in\{1,-1\}$. Since $(\sqrt h)^\#=\sqrt h$ for the
chosen germ at $z_0$,
\begin{equation}\label{eq:reflected-square-root}
 (\sqrt h)^{\delta\gamma}
 =\bigl((\sqrt h)^\gamma\bigr)^\#
 =\epsilon\sqrt h,
\end{equation}
with the same sign $\epsilon$.
All continuations of $G$ are taken along representatives whose lifts
avoid the poles of every matrix entry of $G$, as well as the zeros
and poles of $h$. Since $\det G=1$, $G^{-1}$ is holomorphic
wherever $G$ is.

Choose a new basepoint $z_1\in I$ in the coordinate neighborhood of
$x_1$ from Lemma~\ref{lem:vertex}, where $G$ and its continuation
along $\gamma$ are regular. Let $\alpha$ be the path in $I$ from
$z_0$ to $z_1$. Continue the germs analytically along $\alpha$ and
replace $\gamma$ by $\alpha\gamma\alpha^{-1}$, a loop based at $z_1$.
Since $\delta$ fixes $I$ pointwise, $\delta\alpha=\alpha$, and hence
\[
 \delta(\alpha\gamma\alpha^{-1})
 =\alpha(\delta\gamma)\alpha^{-1}.
\]
Parallel transport for $\nabla^\lambda$ along $\alpha$ identifies the
monodromy representations at the two basepoints. We denote
the new basepoint by $z_0$ and the loop $\alpha\gamma\alpha^{-1}$
by $\gamma$.
We also use $J$ for a small real interval about the new
basepoint, with the identities continued along $I$.

\textcolor{black}{Let $\widehat\Psi=G^{-1}\Psi$ be the parallel frame
for $\dd+\xi^\lambda$ satisfying
$\dd\widehat\Psi^\lambda=-\xi^\lambda\widehat\Psi^\lambda$
with initial condition $\widehat\Psi^\lambda(z_0)=\mathrm{Id}$.}
Since $\xi^\lambda$ in~\eqref{eq:edge-normal-form} depends
holomorphically on $\lambda\in\C^*$, so do the normalized parallel
frame $\widehat\Psi^\lambda$ and its monodromy matrices.
Along the chosen arc of $E$, parametrized by $z\in J$,
Corollary~\ref{cor:edge-normal-form} gives
$\widehat\Psi=G_{\mathrm u}^{-1}F$, where $G_{\mathrm u}\in\SU(2)$.
Thus $\widehat\Psi^\lambda$ is unitary there for $|\lambda|=1$.

Since $\gamma^*\widehat\Psi$ and $\widehat\Psi$ solve the same
equation, write
$\gamma^*\widehat\Psi=\widehat\Psi\widehat M_\gamma$, where
$\widehat M_\gamma$ denotes the monodromy matrix of $\widehat\Psi$.
Using $\widehat\Psi=G^{-1}\Psi$ and $\gamma^*\Psi=\Psi M_\gamma$,
we also have
\begin{equation}\label{eq:normalizing-gauge-continuation}
\begin{aligned}
 \gamma^*\widehat\Psi
 &=(\gamma^*G)^{-1}\gamma^*\Psi\\
 &=(\gamma^*G)^{-1}\Psi M_\gamma
  =(\gamma^*G)^{-1}G\widehat\Psi M_\gamma.
\end{aligned}
\end{equation}
Rearranging~\eqref{eq:normalizing-gauge-continuation} gives
\begin{equation}\label{eq:gauge-comparison}
 (\gamma^*G)^{-1}G
 =\widehat\Psi\,\widehat M_\gamma M_\gamma^{-1}\widehat\Psi^{-1}
 \qquad(0<|\lambda|<r).
\end{equation}
At $z_0$, the normalization $\widehat\Psi(z_0)=\mathrm{Id}$ gives
\[
 \widehat M_\gamma M_\gamma^{-1}
 =\left((\gamma^*G)^{-1}G\right)(z_0),
\]
so $\widehat M_\gamma M_\gamma^{-1}$ is holomorphic at $\lambda=0$.
The identities
$(G^{\lambda=0})_{11}=v_1/\sqrt h$ and
$(G^{\lambda=0})_{21}=v_2/\sqrt h$ give
\[
 \left((\gamma^*G)^{-1}G\right)^{\lambda=0}=\begin{pmatrix}\epsilon&*\\0&\epsilon\end{pmatrix}.
\]
The sign here is the sign for the continuation of $\sqrt{h}$ in
\eqref{eq:reflected-square-root}.

In the coordinate $w$ of Lemma~\ref{lem:vertex}, write
$a=\omega(\partial_w)$ and $b=\vartheta(\partial_w)$.
These are holomorphic at $w=0$, with
$a(0)\ne0$ and $\ord_0b=n_1-2>0$. Thus $\xi^\lambda$ and
the initial frame $\widehat\Psi$ extend holomorphically to
$w=0$ for every $\lambda\ne0$, with $\widehat\Psi$ invertible there.

The matrix
\[
 Z=\epsilon\widehat\Psi\,\widehat M_\gamma M_\gamma^{-1}\widehat\Psi^{-1}
\]
is holomorphic at $w=0$ and satisfies
\[
 \partial_wZ=[Z,\xi^\lambda(\partial_w)].
\]
On a real interval near $0$ where $G$ and $\gamma^*G$ are regular,
$Z=\epsilon(\gamma^*G)^{-1}G$ extends holomorphically to $|\lambda|<r$,
with
\[
 Z^{\lambda=0}=\begin{pmatrix}1&*\\0&1\end{pmatrix}.
\]
Lemma~\ref{lem:positive-rigidity} gives $Z\equiv\mathrm{Id}$.
Consequently,
\[
 \gamma^*G=\epsilon G,\qquad \widehat M_\gamma=\epsilon M_\gamma.
\]
The same argument for $\delta\gamma$, using
\eqref{eq:reflected-square-root}, gives
$\widehat M_{\delta\gamma}=\epsilon M_{\delta\gamma}$ with the same sign.
Since all four monodromy matrices are holomorphic on $\C^*$,
the identity theorem gives $\widehat M_\gamma=\epsilon M_\gamma$
and $\widehat M_{\delta\gamma}=\epsilon M_{\delta\gamma}$ for every
$\lambda\in\C^*$.
In particular, both $\widehat M_\gamma$ and
$\widehat M_{\delta\gamma}$ are unitary on $S^1$.

The identity $(\xi^\lambda)^\#=-\xi^\lambda$  in
Corollary~\ref{cor:edge-normal-form} then implies that
$(\widehat\Psi^\#)^{-1}$ solves the same equation as
$\widehat\Psi$. Both are normalized to $\mathrm{Id}$ at $z_0$,
so uniqueness gives
$\widehat\Psi\widehat\Psi^\#=\mathrm{Id}$ for
$r<|\lambda|<r^{-1}$. Since $\#$ reverses the order of
matrix products, we have
\[
 \gamma^*(\widehat\Psi^\#)
 =((\delta\gamma)^*\widehat\Psi)^\#
 =\widehat M_{\delta\gamma}^\#\widehat\Psi^\#.
\]
Thus the monodromy matrices along $\gamma$ and $\delta\gamma$ satisfy
\[
 \widehat M_\gamma\widehat M_{\delta\gamma}^\#=\mathrm{Id}.
\]
On $S^1$, unitarity implies $\widehat M_{\delta\gamma}^\#=\widehat M_{\delta\gamma}^{-1}$,
so $\widehat M_\gamma=\widehat M_{\delta\gamma}$ there.
Cancelling the common sign $\epsilon$ gives
\[
 M_\gamma=M_{\delta\gamma}
\]
on $S^1$ and hence on $\C^*$ by the identity theorem.

\end{proof}

\begin{corollary}\label{cor:local-order}
For every branch $(C^\rho,N^\rho)$ at $z_0$, $n_1$ is the
smallest positive integer $m$ such that the analytic continuation of
$(C^\rho,N^\rho)$ along $\gamma_1^m$
leaves both germs unchanged. In other words, 
\[
 (C^\rho)^{\gamma_1^m}=C^\rho,\qquad
 (N^\rho)^{\gamma_1^m}=N^\rho \qquad \Rightarrow n_1\mid m.
\]

\end{corollary}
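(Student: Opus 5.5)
The plan is to reduce everything to the reflection principle (Proposition~\ref{prop:reflection}) applied to a conjugate of $\gamma_1^m$, and then to evaluate the resulting identity at a Sym point, where Lemma~\ref{thm:monodromy-conditions}(3) makes all monodromy matrices commute.

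\textbf{Step 1: pass to the base germ.} Fix a branch $(C^\rho,N^\rho)$ with $\rho\in\pi_1(\mathcal S_4,z_0)$, and suppose continuation along $\gamma_1^m$ fixes both germs. With the convention $u^{\gamma\rho}=(u^\rho)^\gamma$, set $\sigma=\rho^{-1}\gamma_1^m\rho$. Then
\[
 C^\sigma=\bigl((C^\rho)^{\gamma_1^m}\bigr)^{\rho^{-1}}=(C^\rho)^{\rho^{-1}}=C,
\]
and likewise $N^\sigma=N$. So $\sigma$ satisfies the hypothesis of Proposition~\ref{prop:reflection}, which gives $M_\sigma=M_{\delta\sigma}$ on $\C^*$. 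Since $\delta$ is an automorphism of $\pi_1(\mathcal S_4,z_0)$ and $\delta\gamma_1=\gamma_1^{-1}$ by~\eqref{eq:reflected-generator}, we have $\delta\sigma=(\delta\rho)^{-1}\gamma_1^{-m}(\delta\rho)$. Hence
\[
 M_\rho^{-1}M_{\gamma_1}^{m}M_\rho
 =M_{\delta\rho}^{-1}M_{\gamma_1}^{-m}M_{\delta\rho}
 \qquad(\lambda\in\C^*).
\]

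\textbf{Step 2: evaluate at a Sym point.} For a generic $\lambda$ this identity is weak, because $M_{\gamma_1}^m$ and $M_{\gamma_1}^{-m}$ are always conjugate in $\SL(2,\C)$ (they have the same trace). So the key step is to specialize to $\lambda=1$ (or $\lambda=-1$). There, by Lemma~\ref{thm:monodromy-conditions}(3), all monodromy matrices are simultaneously diagonalizable, hence commute. The conjugations then drop out, giving $M_{\gamma_1}^m=M_{\gamma_1}^{-m}$, that is, $M_{\gamma_1}^{2m}=\mathrm{Id}$ at $\lambda=1$.

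\textbf{Step 3: read off the divisibility.} By Lemma~\ref{thm:monodromy-conditions}(2), the eigenvalues of $M_{\gamma_1}$ at $\lambda=1$ are $e^{\pm\pi i/n_1}$, and they are distinct. So $M_{\gamma_1}^{2m}=\mathrm{Id}$ forces $e^{2\pi i m/n_1}=1$, i.e.\ $n_1\mid m$. Conversely, Proposition~\ref{prop:continuation} shows that continuation along $\gamma_1^{n_1}$ fixes every branch. Thus $n_1$ is exactly the smallest positive such $m$.

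The only delicate point is Step 2: recognizing that the reflection identity must be combined with the simultaneous diagonalizability at the Sym points. The remaining bookkeeping, namely the composition convention and the fact that $\delta$ respects products, is routine.
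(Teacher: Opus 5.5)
Your proposal is correct and follows the paper's proof essentially step for step. The paper likewise applies Proposition~\ref{prop:reflection} to $\rho^{-1}\gamma_1^m\rho$, rewrites the result using $\delta\gamma_1=\gamma_1^{-1}$, specializes to $\lambda=1$ where Lemma~\ref{thm:monodromy-conditions}(3) makes the conjugations drop out, reads off $n_1\mid m$ from the eigenvalues $e^{\pm\pi i/n_1}$, and gets minimality from Proposition~\ref{prop:continuation}.
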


\begin{proof}
For a loop $\rho$ based at $z_0$ in $\mathcal S_4$,
consider the branch $(C^\rho,N^\rho)$ of $(C,N)$.
Suppose that continuation along $\gamma_1^m$ leaves both germs unchanged.
Following first $\rho$, then $\gamma_1$ $m$ times, and finally
$\rho$ backwards therefore leaves the original germs $(C,N)$ unchanged.
With our convention for loop composition, this is the loop
$\gamma=\rho^{-1}\gamma_1^m\rho$.
By~\eqref{eq:reflected-generator}, we can write the identity
$M_\gamma=M_{\delta\gamma}$ from
Proposition~\ref{prop:reflection} as
\[
 M_\rho^{-1}M_{\gamma_1}^{m}M_\rho
 =M_{\delta\rho}^{-1}M_{\gamma_1}^{-m}M_{\delta\rho}.
\]
At $\lambda=1$, both $M_\rho^{\lambda=1}$ and $M_{\delta\rho}^{\lambda=1}$
commute with $M_{\gamma_1}^{\lambda=1}$ by
Lemma~\ref{thm:monodromy-conditions}\textup{(3)}. The displayed identity
therefore reduces to
\[
 (M_{\gamma_1}^{\lambda=1})^m=(M_{\gamma_1}^{\lambda=1})^{-m}.
\]
Since the eigenvalues of $M_{\gamma_1}^{\lambda=1}$ are $e^{\pm\pi i/n_1}$,
we obtain $e^{2\pi i m/n_1}=1$, so $n_1$ divides $m$.
Together with the upper bound from Proposition~\ref{prop:continuation},
this proves the assertion.
\end{proof}

\subsection{From algebraic dependence to finite monodromy}\label{sec:algebraic-parameter}

In this subsection, we assume that the Lawson surface is algebraic.
By Proposition~\ref{prop:surface-relation}, $C$ and $N$ are then
algebraically dependent. We prove that both are algebraic over
$\C(z)$, so their joint analytic continuation has only finitely
many branches.

\begin{proposition}\label{prop:algebraic-parameter}
Let $C$ and $N$ be the meromorphic functions associated with the Lawson
surface $\xi_{k,l}$ in Section~\ref{sec:continuation}. If $C$ and $N$
are algebraically dependent over $\C$, then each is algebraic over
$\C(z)$. Thus there exist nonzero polynomials
$A_C,A_N\in\C[z,T]$ such that
\[
 A_C(z,C)=0,\qquad A_N(z,N)=0.
\]
In particular, the analytic continuation of $(C,N)$ has only
finitely many branches.
\end{proposition}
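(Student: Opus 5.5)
The plan is to exploit the differential relation~\eqref{eq:differential-relation} together with the assumed relation $A(C,N)=0$. This produces an ordinary differential equation linking $z$ to an algebraic curve, and the task is then to show that its solution is algebraic.

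\textbf{Step 1: reduction to $C$.} Let $A$ be an irreducible polynomial with $A(C,N)=0$. Set $F=\C(C,N)$; it is a function field of transcendence degree one, because $C$ is nonconstant by Proposition~\ref{prop:continuation}. Let $X$ be the compact Riemann surface with function field $F$. It suffices to show that $C$ is algebraic over $\C(z)$. Indeed, $N$ is algebraic over $\C(C)$, so it is then algebraic over $\C(z)$ as well, and we obtain $A_C$ and $A_N$. An algebraic function over $\C(z)$ has at most $\deg_T A_C\cdot\deg_T A_N$ joint branches, which gives the final claim.

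\textbf{Step 2: an ODE on $X$.} Since $N$ is nonconstant, the derivation $\dd N/\dd C$ is a nonzero element $\psi\in F$, and $N'=\psi\,C'$ with $'=\dd/\dd z$. Substituting into~\eqref{eq:differential-relation} gives
\[
 (C')^2\,\frac{\psi}{CN}=\frac{2iq_0}{p(z)},
 \qquad\text{that is,}\qquad
 \frac{\dd z}{\sqrt{p(z)}}=\frac{\dd C}{\sqrt{\phi}},\quad \phi=\frac{2iq_0\,CN}{\psi}\in F^\times .
\]
The left side is the holomorphic differential $\omega_1$ on the elliptic curve $\mathcal E:\ w^2=p(z)$; the four $p_j$ are distinct, so $\mathcal E$ has genus one. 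The right side is a rational differential $\omega_2$ on the double cover $X'\to X$ defined by $\sqrt\phi$. The germ $z\mapsto(z,\sqrt p,C,N,\sqrt\phi)$ therefore traces a leaf of the algebraic foliation $\omega_1=\omega_2$ on $\mathcal E\times X'$. Moreover, $C$ is algebraic over $\C(z)$ exactly when this leaf is an algebraic curve.

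\textbf{Step 3 (main obstacle): excluding a transcendental leaf.} I would first try Ax's theorem on differential forms: if $z$ and $C$ were algebraically independent, the leaf would be Zariski dense in the surface $\mathcal E\times X'$. Ax's result then forces the relation $\omega_1=\omega_2$ to degenerate, in the sense that $\int\omega_1$ and $\int\omega_2$ cannot agree along a Zariski-dense analytic curve unless both are constant. This would contradict $\omega_1\neq0$. If that theorem does not apply cleanly, for instance because $\omega_2$ may have residues, I would argue directly through uniformization: $u=\int\omega_1$ identifies $z=\wp$-type$(u)$ as an elliptic function, so $z=\mathcal Z(\int\omega_2)$ along the leaf. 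I would then use the local information at the punctures to rule out logarithmic or period-incompatible behaviour of $\int\omega_2$. By Lemma~\ref{lem:vertex} and Proposition~\ref{prop:continuation}, every branch of $(C,N)$ is meromorphic in $(z-p_j)^{1/n_j}$. Hence near each puncture the leaf is a Puiseux curve through a finite point of $\mathcal E\times X'$, and $\int\omega_2$ has no logarithmic singularity there. Combined with meromorphy of $C,N$ on all of $\widetilde{\mathcal S_4}$, this should force the periods and residues of $\omega_2$ restricted to the leaf to be compatible with the lattice of $\mathcal E$. The induced map from the normalization of the leaf's Zariski closure to $\mathcal E$ is then algebraic, so the leaf is a curve. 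I expect this step, controlling the residues and periods of $\omega_2$, to be the main difficulty.
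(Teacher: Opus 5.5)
\textbf{There is a genuine gap in Step~3.} Your Steps~1 and~2 are correct and match the paper's setup. Your $\omega_2=\dd C/\sqrt\phi$ on $X'$ is exactly the paper's form $\alpha$, with $\alpha^2=\frac{1}{2iq_0}\dd\log C\,\dd\log N$, and the task is indeed to show that its primitive is compatible with the lattice $\Lambda$ of $y^2=p(z)$. But Step~3 is the whole content of the proposition, and neither route you indicate works.

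\emph{The Ax-type claim is false.} Take $\mathcal E'=\C/\Lambda'$ with $\Lambda'$ incommensurable with $\Lambda$, and let $\omega_2$ be the holomorphic differential of $\mathcal E'$. The leaves of $\omega_1=\omega_2$ on $\mathcal E\times\mathcal E'$ are images of lines $u\mapsto(u,u+c)$. They are Zariski dense, and neither integral is constant. Ax-type results bound transcendence degrees from below; they do not exclude such density.

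\emph{The fallback uses only qualitative local information} (meromorphy on $\widetilde{\mathcal S_4}$, finite local monodromy, Puiseux behaviour at the punctures), and this cannot suffice. Put $u(z)=\int_{z_0}^z\dd\zeta/\sqrt{p(\zeta)}$. The function $\wp_{\Lambda'}(u(z)+c)$ is meromorphic on $\widetilde{\mathcal S_4}$, has local monodromy of order dividing two, and is meromorphic in $(z-p_j)^{1/2}$ on every branch, yet it is transcendental over $\C(z)$. There are further problems with the sketch:
\begin{itemize}
\item Your claim that every branch is meromorphic in $(z-p_j)^{1/n_j}$ is unsupported. Lemma~\ref{lem:vertex} concerns only the initial branch at $p_1$, and Proposition~\ref{prop:continuation} gives only finite local monodromy.
\item Poles of $\omega_2$ on $X'$ need not lie over a puncture, so the behaviour of the leaf near the punctures does not control residues.
\item A map from ``the normalization of the leaf's Zariski closure'' to $\mathcal E$ presupposes the conclusion; the map has to be built on $X'$ itself.
\end{itemize}

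\textbf{What is missing} are two quantitative arguments on the compact curve, both using $n_1>2$. Let $U$ be the primitive of $\omega_2$ on the universal cover of $X'$ minus the zeros and poles of $\omega_2$, normalized to agree with $u$ on the initial branch. Let $R$ be the degree-two elliptic function inverting $u$, so that $R^{-1}(p_1)=u_1+\Lambda$ and $R^{-1}(\{p_1,\dots,p_4\})=u_1+\tfrac12\Lambda$. Composing $C,N$ on $X'$ with local inverses of $R\circ U$ yields analytic continuations of $(C,N)$.

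\emph{Periods.} At the point of $X'$ over the vertex $x_1$, Lemma~\ref{lem:vertex} yields $U-u_1\sim\zeta^m$ and $C-C(x_1)\sim\zeta^d$ with $d/m=2/n_1\notin\mathbb Z$. A period $\tau\notin\tfrac12\Lambda$ would carry this point to one where $U=u_1+\tau$ and $R$ is unramified. There, a meromorphic germ of a branch of $C$ at $R(u_1+\tau)\in\mathcal S_4$ would force $d\in m\mathbb Z$, a contradiction. Hence all periods lie in $\tfrac12\Lambda$.

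\emph{No poles.} Near a simple pole with residue $\alpha_{-1}$, $U$ behaves like $\alpha_{-1}\log\zeta$. By Rouch\'e, it attains values in $u_1+\Lambda$ at points with $U'\neq0$ accumulating at the pole. Since $R$ is two-to-one over $p_1$ there, this produces branches of $(C,N)$ whose local monodromy at $p_1$ divides~$2$. That contradicts Corollary~\ref{cor:local-order}: the exact order $n_1$ on \emph{every} branch, which rests on the reflection principle and never enters your sketch.

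With both facts, $\omega_2$ is holomorphic, and its periods define a homomorphism $\pi_1(X')\to(\tfrac12\Lambda)/\Lambda$. On the corresponding cover of degree at most four, $R\circ(U\bmod\Lambda)$ is a meromorphic function restricting to $z$, which gives algebraicity of $C$ and $N$ over $\C(z)$.
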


\begin{proof}
We construct a compact Riemann surface on which $C,N$ and the
coordinate $z$ are meromorphic.
We compare a primitive of the one-form $\alpha$
defined in~\eqref{eq:algebraic-square-root} with the elliptic integral
associated with $p$.

\textbf{Construction of the algebraic curve.}
Choose an irreducible polynomial $A\in\C[T_1,T_2]$ with $A(C,N)=0$.
Let $Y$ be the normalization of the projective closure of
$A(T_1,T_2)=0$ in $\CP^2$, and let $C_Y,N_Y$ be the meromorphic
functions induced by the coordinate functions $T_1,T_2$.
On a sufficiently small disk $D_*\subset\mathcal S_4$ around
$z_0$, the initial functions $C,N$ are holomorphic. The map
$z\mapsto(C(z),N(z))$ lifts to a holomorphic map
$\iota:D_*\to Y$ satisfying
\[
 C_Y\circ\iota=C,\qquad N_Y\circ\iota=N.
\]
Define $\pi_X:X\to Y$ as the compact normalization of the cover
defined by
\begin{equation}\label{eq:algebraic-square-root}
 \alpha^2=\frac{1}{2iq_0}\,\pi_X^*(\dd\log C_Y\,\dd\log N_Y).
\end{equation}
This is a connected cover of degree at most two; if the square
root is already well-defined on $Y$, we take $X=Y$. Set
\[
 C_X=C_Y\circ\pi_X,\qquad N_X=N_Y\circ\pi_X.
\]
After shrinking $D_*$, choose a holomorphic square root
$\sqrt{p(z)}$ and a lift $\iota_X:D_*\to X$ of $\iota$.
By~\eqref{eq:hopf} and~\eqref{eq:differential-relation}, the sign
of $\alpha$ can be chosen so that
\[
 \iota_X^*\alpha=\frac{\dd z}{\sqrt{p(z)}}.
\]

\textbf{Comparison with the elliptic integral.}
Define a local coordinate on $D_*$ by
\[
 u(z)=\int_{z_0}^z\frac{\dd\zeta}{\sqrt{p(\zeta)}},
 \qquad \dd u=\iota_X^*\alpha,
\]
\textcolor{black}{and let $R$ be its local inverse. Uniformization of the elliptic
curve $y^2=p(z)$ extends $R$ to a degree-two meromorphic function
on $\C/\Lambda$ satisfying $(R')^2=p(R)$.
Let $u_1$ be the finite limit of $u(z)$ as $z\to p_1$ along $I$.
The covering involution $u\mapsto2u_1-u$ gives}
\begin{equation}\label{eq:elliptic-branch-points}
 R^{-1}(\{p_1,p_2,p_3,p_4\})=u_1+\tfrac12\Lambda,\qquad
 R^{-1}(p_1)=u_1+\Lambda
\end{equation}
on $\C$. At these points $R$ has local degree two; elsewhere
it is locally biholomorphic as a map to $\CP^1$.

{\color{black}
Let $\pi_\circ:\widetilde{X^\circ}\to X^\circ$ be the universal
covering of the complement $X^\circ\subset X$ of the zeros and
poles of $\alpha$. Lift $\iota_X$ to
$\widetilde\iota:D_*\to\widetilde{X^\circ}$ and write
$\widetilde C_X=C_X\circ\pi_\circ$ and
$\widetilde N_X=N_X\circ\pi_\circ$.
The primitive $U$ of $\pi_\circ^*\alpha$ normalized by
$U(\widetilde\iota(z_0))=0$ satisfies $U\circ\widetilde\iota=u$,
so $R(U(\widetilde\iota(z)))=z$ on $D_*$.
On the connected set
\[
 \Omega=\widetilde{X^\circ}\setminus
 U^{-1}(u_1+\tfrac12\Lambda),
\]
$R\circ U$ is locally biholomorphic into $\mathcal S_4$.
Composing $\widetilde C_X,\widetilde N_X$ with its local inverses
gives analytic continuations of $(C,N)$. Their local monodromy around
$p_j$ has order dividing $n_j$ by Proposition~\ref{prop:continuation},
and exactly $n_1$ around $p_1$ on every branch by
Corollary~\ref{cor:local-order}.
}

\textbf{Restriction of the periods.}
For a closed curve $\gamma$ in $X^\circ$, we prove that the
period of $\alpha$ satisfies
\begin{equation}\label{eq:period-restriction}
 \tau=\int_\gamma\alpha\in\tfrac12\Lambda.
\end{equation}
The corresponding deck transformation of $\widetilde{X^\circ}$
replaces $U$ by $U+\tau$ and leaves
$\widetilde C_X,\widetilde N_X$ unchanged.

The limit $U\to u_1$ as $z\to p_1$ along $I$ corresponds to
$x_1\in\Sigma$ with $f(x_1)=Q_1$.
Let $w$ be the coordinate at $x_1$ from Lemma~\ref{lem:vertex}.
The map $w\mapsto(C(w),N(w))$ lifts uniquely to $Y$; denote the
image of $w=0$ by $a_1$, and set $C_0=C_Y(a_1)=C(0)$.
Since $C'(0)\ne0$, the lift is unramified, so $w$ is also a
local coordinate on $Y$. By the same lemma,
$\frac{1}{2iq_0}\dd\log C_Y\,\dd\log N_Y$ has order $n_1-2$
at $a_1$.
At the point of $X$ above $a_1$ selected by the initial branch,
the cover is unramified and $\alpha$ has order $(n_1-2)/2$
if $n_1$ is even. If $n_1$ is odd, a local coordinate $\zeta$
satisfies $w=\zeta^2$, and $\alpha$ has order $n_1-1$.
Thus the primitive determined by the initial branch extends
holomorphically to this point, and in a local coordinate
$\zeta$ on $X$ we have
\begin{equation}\label{eq:primitive-at-vertex}
 \begin{aligned}
 U-u_1&=a\zeta^m+O(\zeta^{m+1}),&
 C_X-C_0&=b\zeta^d+O(\zeta^{d+1}),\\
 (m,d)&=
 \begin{cases}
 (n_1/2,1),& n_1\text{ even},\\
 (n_1,2),& n_1\text{ odd},
 \end{cases}
 &a,b&\in\C\setminus\{0\}.
 \end{aligned}
\end{equation}
In either case $d/m=2/n_1\notin\mathbb Z$.

Suppose $\tau\notin\tfrac12\Lambda$, and put $z_\tau=R(u_1+\tau)$.
By~\eqref{eq:elliptic-branch-points}, $z_\tau\in\mathcal S_4$
and $R$ is locally biholomorphic at $u_1+\tau$.
The preceding continuation argument and
Proposition~\ref{prop:continuation} give a meromorphic germ $c$
at $z_\tau$ satisfying
\[
 c\bigl(R(U(\zeta)+\tau)\bigr)=C_X(\zeta).
\]
Since $C_X(\zeta)\to C_0$ as $\zeta\to0$, the germ $c$ is holomorphic at
$z_\tau$ with $c(z_\tau)=C_0$. Let $q\ge1$ be the vanishing
order of $c\circ R-C_0$ at $u_1+\tau$. Thus
\[
 c\bigl(R(u_1+\tau+t)\bigr)-C_0
 =At^q+O(t^{q+1}),\qquad A\ne0.
\]
Substituting $t=U(\zeta)-u_1=a\zeta^m+O(\zeta^{m+1})$
from~\eqref{eq:primitive-at-vertex} and using
$c(R(U(\zeta)+\tau))=C_X(\zeta)$ gives
\[
 C_X(\zeta)-C_0=Aa^q\zeta^{mq}+O(\zeta^{mq+1}).
\]
Comparing this with the leading term $b\zeta^d$ in
\eqref{eq:primitive-at-vertex} gives $d=mq$, contrary to
$d/m\notin\mathbb Z$. This proves~\eqref{eq:period-restriction}.

\textbf{Exclusion of poles.}
We next show that $\alpha$ has no poles. On $X$,
\eqref{eq:algebraic-square-root} reads
\[
 \alpha^2=\frac{1}{2iq_0}\,\dd\log C_X\,\dd\log N_X.
\]
Both logarithmic differentials have at most simple poles,
so any pole of $\alpha$ is simple.
Suppose such a pole occurs at $\zeta=0$. Choose $\varepsilon>0$
so that $0<|\zeta|<\varepsilon$ lies in $X^\circ$.
Its universal covering is
$\{\operatorname{Re}v<\log\varepsilon\}$, with covering map
$\zeta=e^v$. Choose a lift of this map to $\widetilde{X^\circ}$.
The pullback $\widetilde U$ of $U$ by this lift has the form
\[
 \widetilde U(v)=\alpha_{-1}v+\mathcal H(e^v),
 \qquad \alpha_{-1}\in\C\setminus\{0\},
\]
where $\alpha_{-1}$ is the residue of $\alpha$ and
$\mathcal H$ is holomorphic on $|\zeta|<\varepsilon$.
Choose a lattice vector $\omega_\Lambda\in\Lambda$ with
$\operatorname{Re}(\omega_\Lambda/\alpha_{-1})<0$; such a vector exists
because $\Lambda$ spans $\C$ over $\R$.
For $n\in\mathbb N$, set
\[
 v_n^0=\frac{u_1+n\omega_\Lambda-\mathcal H(0)}{\alpha_{-1}}.
\]
Then $\operatorname{Re}v_n^0\to-\infty$, and
\[
 \widetilde U(v)-u_1-n\omega_\Lambda
 =\alpha_{-1}(v-v_n^0)+\mathcal H(e^v)-\mathcal H(0).
\]
On $|v-v_n^0|=1$, the first term has modulus $|\alpha_{-1}|$,
while the remainder tends uniformly to zero.
Rouch\'e's theorem therefore gives, for sufficiently large $n$,
a point $v_n$ with $|v_n-v_n^0|<1$ and
\[
 \widetilde U(v_n)=u_1+n\omega_\Lambda,\qquad
 \operatorname{Re}v_n\to-\infty,\qquad
 \zeta_n:=e^{v_n}\to0.
\]
Moreover,
\[
 \widetilde U'(v_n)
 =\alpha_{-1}+\zeta_n\mathcal H'(\zeta_n)
 \longrightarrow\alpha_{-1}\ne0.
\]
By~\eqref{eq:elliptic-branch-points}, $R$ has local degree two
at $u_1+n\omega_\Lambda$ over $p_1$. Hence $R\circ\widetilde U$ has
local degree two at $v_n$ for sufficiently large $n$.
Analytic continuation twice around $p_1$ therefore returns
each local inverse to itself. Since $C_X(e^v)$ and $N_X(e^v)$
are single-valued meromorphic functions near $v_n$, composing
them with either inverse gives a pair whose germs return
after two turns.
The chosen lift maps a sufficiently small punctured neighborhood
of $v_n$ into $\Omega$, so these pairs are analytic continuations
of $(C,N)$. Their local monodromy has order dividing two,
contradicting Corollary~\ref{cor:local-order}, since $n_1>2$.
Thus $\alpha$ is holomorphic on $X$.

\textbf{Construction of the finite cover.}
By~\eqref{eq:period-restriction}, the periods of the holomorphic
one-form $\alpha$ define a homomorphism
\[
 \pi_1(X)\longrightarrow(\tfrac12\Lambda)/\Lambda,\qquad
 [\gamma]\longmapsto\int_\gamma\alpha\pmod\Lambda.
\]
The target has four elements, so the kernel defines a connected
unbranched covering $\widehat X\to X$ of degree at most four.
On this covering, all periods of the pullback of $\alpha$
belong to $\Lambda$. Integration therefore gives a holomorphic
map $\widehat X\to\C/\Lambda$ agreeing with $U$ modulo $\Lambda$.
Since $R$ is $\Lambda$-periodic, we can form the composition
\[
 \widehat X\xrightarrow{\,U\bmod\Lambda\,}\C/\Lambda
 \xrightarrow{\,R\,}\CP^1.
\]
The resulting meromorphic function on $\widehat X$ is denoted
by $R\circ U$. It agrees with $z$ on the initial branch and
is therefore nonconstant.

Since $\widehat X$ is compact, $R\circ U$ defines a holomorphic
map of finite degree to $\CP^1$. The field of meromorphic
functions on $\widehat X$ is therefore a finite extension of
$\C(R\circ U)$. In particular, the pullbacks of $C_X,N_X$
are algebraic over $\C(R\circ U)$. Restricting to the initial branch
shows that $C,N$ are algebraic over $\C(z)$. Hence there exist
nonzero polynomials $A_C,A_N\in\C[z,T]$ such that
\[
 A_C(z,C)=0,\qquad A_N(z,N)=0.
\]
Consequently, the pair has only finitely many branches.
\end{proof}

\subsection{Infinite monodromy of \texorpdfstring{$(C,N)$}{(C,N)}}\label{sec:no-finite-monodromy}

\begin{proposition}\label{prop:infinite-monodromy}
For $k\geq l\geq1$ with $kl>1$, the pair $(C,N)$ associated with the Lawson surface
$\xi_{k,l}$ does not have finite monodromy on $\mathcal S_4$.
\end{proposition}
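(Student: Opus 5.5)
We argue by contradiction, following the outline in the introduction. Suppose the set of branches $\{(C^\gamma,N^\gamma)\}$ is finite. Then the stabilizer
\[
 H=\{\gamma\in\pi_1(\mathcal S_4,z_0): C^\gamma=C,\ N^\gamma=N\}
\]
is a subgroup of finite index. Passing to its normal core, we may assume that $H$ is normal. By Proposition~\ref{prop:continuation} and Corollary~\ref{cor:local-order}, every conjugate $\rho^{-1}\gamma_j^{n_j}\rho$ lies in $H$. Moreover, for $j=1$ the smallest power of $\rho^{-1}\gamma_1\rho$ that lies in $H$ is exactly the $n_1$-th. Hence the finite cover $X_H\to\CP^1$ corresponding to $H$ has ramification of order dividing $n_j$ over $p_j$, and order exactly $n_1$ over $p_1$. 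The Lawson surface $\Sigma$ is the orbifold cover of $\CP^1$ with orders $n_j$, so it corresponds to the normal subgroup generated by the $\gamma_j^{n_j}$ together with the kernel of the abelian deck-group quotient. Taking the fibre product of $X_H$ with $\Sigma$ over $\CP^1$, and normalizing, therefore gives a compact Riemann surface $\widehat\Sigma$ together with a finite \emph{unbranched} cover $\widehat\Sigma\to\Sigma$. On $\widehat\Sigma$, the associated family pulls back to the associated family of the minimal immersion $\widehat\Sigma\to\Sigma\to\Sphere^3$.

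The next step applies Proposition~\ref{prop:reflection} to every $\gamma$ in the finite-index subgroup $H'=\pi_*\pi_1(\widehat\Sigma^\circ)\cap H$. This gives $M_\gamma=M_{\delta\gamma}$ for all $\gamma\in H'$. Since $\widehat\Sigma$ is compact of genus $>1$ and the immersion is not totally geodesic and does not factor through a torus, \cite{Heller} (see also \cite{Gerding}) shows that the monodromy of $\nabla^\lambda$ restricted to $\pi_1(\widehat\Sigma)$ is nonabelian, and in fact irreducible, for generic $\lambda\in S^1$. By \eqref{eq:monodromy-comparison} the same holds, up to the sign character $\varepsilon$, for $\{M_\gamma:\gamma\in H'\}$. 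Fix such a $\lambda_0\in S^1$. At $\lambda_0$ the identity $M_\gamma=M_{\delta\gamma}$ says that the two $\SU(2)$-representations $\gamma\mapsto M_\gamma$ and $\gamma\mapsto M_{\delta\gamma}$ agree on $H'$.

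Now conjugate by $\gamma_1$. Because $H'$ is normal, $\gamma_1\gamma\gamma_1^{-1}\in H'$ for $\gamma\in H'$. Using $\delta\gamma_1=\gamma_1^{-1}$ from \eqref{eq:reflected-generator}, we obtain
\[
 M_{\gamma_1}M_\gamma M_{\gamma_1}^{-1}
 =M_{\delta(\gamma_1\gamma\gamma_1^{-1})}
 =M_{\gamma_1}^{-1}M_{\delta\gamma}M_{\gamma_1}
 =M_{\gamma_1}^{-1}M_\gamma M_{\gamma_1}.
\]
Hence $M_{\gamma_1}^2$ commutes with every $M_\gamma$, $\gamma\in H'$, at $\lambda_0$. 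Irreducibility of this restricted representation, which follows from its being nonabelian and unitary, then forces $(M_{\gamma_1}^{\lambda_0})^2=\pm\mathrm{Id}$. The eigenvalues of $M_{\gamma_1}^{\lambda_0}$ are $e^{\pm\pi i/n_1}$ by Lemma~\ref{thm:monodromy-conditions}(2). Its square therefore has eigenvalues $e^{\pm 2\pi i/n_1}$, which equal $\pm1$ only if $n_1\le2$. This contradicts $n_1=k+1>2$.

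The main obstacle is the covering step. One has to check that the cover determined by $H$ can be dominated by an \emph{unbranched} cover of $\Sigma$ whose fundamental group maps into $H'$. This is exactly where Corollary~\ref{cor:local-order} is used: ramification exactly $n_1$ over $p_1$ and dividing $n_j$ elsewhere matches the local degrees of $\pi$, so the fibre product is unramified over $\Sigma$. A second point needing care is that \cite{Heller} yields irreducibility, and not merely nonabelianness, for generic unitary $\lambda$. I would try to deduce this from nonabelianness together with unitarity: a reducible unitary representation splits as a sum of characters and is therefore abelian. The sign character $\varepsilon$ is harmless, since it only changes the matrices by central factors.
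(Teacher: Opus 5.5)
Your proposal is correct and is essentially the paper's proof: the normal finite-index subgroup of loops fixing every branch gives an unbranched cover of $\Sigma$, Heller's theorem gives a nonabelian (hence, by unitarity, irreducible) group $\{M_h^{\lambda_0}\}$ for some $\lambda_0\in S^1$, and conjugating the reflection identity by $\gamma_1$ with $\delta\gamma_1=\gamma_1^{-1}$ forces $(M_{\gamma_1}^{\lambda_0})^2=\pm\mathrm{Id}$, which contradicts $n_1>2$. One small correction: unbranchedness over $\Sigma$ only needs the conjugates of $\gamma_j^{n_j}$ to lie in $H$, which is Proposition~\ref{prop:continuation}, so Corollary~\ref{cor:local-order} is not needed at this step, and the paper does not use it in this proof.
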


\begin{proof}
Suppose that $(C,N)$ has only finitely many branches. Let
$K\subset\pi_1(\mathcal S_4,z_0)$ be the kernel of the permutation action on these
branches, and put $H=K\cap\delta(K)$. Then $H$ is a normal subgroup
of finite index, invariant under $\delta$. Every $h\in H$ fixes
the initial pair $(C,N)$, so Proposition~\ref{prop:reflection} gives
\begin{equation}\label{eq:reflection-subgroup}
 M_h=M_{\delta h}\qquad(h\in H).
\end{equation}

Let $\Sigma'{}^\circ\to\Sigma^\circ$ be the connected
covering corresponding to the finite-index subgroup
$\pi_*^{-1}(H)\subset\pi_1(\Sigma^\circ,z_0)$.
By Proposition~\ref{prop:continuation}, $\gamma_j^{n_j}$ fixes every
branch, so $\gamma_j^{n_j}\in K$ for every $j=1,\ldots,4$.
Since $\delta(\gamma_j^{n_j})$ is conjugate to
$\gamma_a^{-n_a}$, where $p_a=\delta(p_j)$, normality of $K$
gives $\gamma_j^{n_j}\in H$.
Loops around the removed points of $\Sigma$ map under $\pi$
to conjugates of these elements, which belong to $H$ by normality.
Hence the covering extends to an unbranched covering
$\widehat\pi\colon\Sigma'\to\Sigma$. If its degree is $d$, then
\[
 \operatorname{genus}(\Sigma')=1+d(g-1)>1.
\]
In particular, $f\circ\widehat\pi$ is a conformal minimal immersion of
a compact Riemann surface of genus at least two.

By~\cite[Theorem~4]{Heller}, the associated family of
$f\circ\widehat\pi$ has nonabelian monodromy for generic
$\lambda\in\C^*$.
It is nonabelian for some $\lambda_0\in S^1$.
By~\eqref{eq:monodromy-comparison},
the pullback of this representation
under the natural surjection
$\pi_1(\Sigma'{}^\circ)\to\pi_1(\Sigma')$ agrees, up to
conjugation and multiplication by scalars in $\{1,-1\}$,
with the representation
$h\mapsto M_h^\lambda$ on $H\cap\pi_*\pi_1(\Sigma^\circ)$.
These signs leave commutators unchanged, so the matrices
$M_h^{\lambda=\lambda_0}$, $h\in H$, form a
nonabelian subgroup of $\SU(2)$.

Since $H$ is a normal subgroup, $\gamma h\gamma^{-1}\in H$
for every $\gamma\in\pi_1(\mathcal S_4,z_0)$ and $h\in H$. Applying
\eqref{eq:reflection-subgroup} to this element, we obtain
\[
 \begin{aligned}
 M_\gamma M_hM_\gamma^{-1}
 &=M_{\gamma h\gamma^{-1}}
 =M_{\delta(\gamma h\gamma^{-1})}\\
 &=M_{\delta\gamma}M_{\delta h}M_{\delta\gamma}^{-1}\\
 &=M_{\delta\gamma}M_hM_{\delta\gamma}^{-1}.
 \end{aligned}
\]
Here the second line uses that $\delta$ is an automorphism of
$\pi_1(\mathcal S_4,z_0)$, and the last equality uses
\eqref{eq:reflection-subgroup} for $h$.
Consequently, $M_{\delta\gamma}^{-1}M_\gamma$ commutes with
every $M_h$, $h\in H$. At $\lambda_0$, their common centralizer
in $\SL(2,\C)$ is $\{\mathrm{Id},-\mathrm{Id}\}$.
Indeed, a nonscalar matrix in the centralizer would have a
one-dimensional eigenspace invariant under all $M_h^{\lambda=\lambda_0}$.
Unitarity would make its orthogonal complement invariant as well,
forcing the group to be abelian.

Taking $\gamma=\gamma_1$ and using~\eqref{eq:reflected-generator},
we obtain
\[
 (M_{\gamma_1}^{\lambda=\lambda_0})^2\in
 \{\mathrm{Id},-\mathrm{Id}\}.
\]
But Lemma~\ref{thm:monodromy-conditions}\textup{(2)} gives the eigenvalues
$e^{\pm\pi i/n_1}$ for $M_{\gamma_1}^{\lambda=\lambda_0}$. Their squares
$e^{\pm2\pi i/n_1}$ are distinct for $n_1>2$, a contradiction.
\end{proof}

\begin{proof}[Proof of the \hyperlink{thm:main}{Main Theorem}]
If $\xi_{k,l}$ were algebraic, Proposition~\ref{prop:surface-relation}
would give an algebraic relation between $C$ and $N$.
By Proposition~\ref{prop:algebraic-parameter}, both functions
would then be algebraic over $\C(z)$ and have finite joint
monodromy, contradicting Proposition~\ref{prop:infinite-monodromy}.
Thus every real polynomial vanishing on $\xi_{k,l}$ vanishes
identically on $\Sphere^3$.
\end{proof}

The proof of the main theorem suggests a relationship between the
algebraicity of a minimal surface and the monodromy representations
of its associated family. Since these representations are generically
nonabelian for compact minimal surfaces of genus greater than one,
we are led to the following question.
\begin{question}
Does there exist a compact embedded minimal surface in $\Sphere^3$
of genus greater than one that is algebraic?
\end{question}

\subsection*{Acknowledgements}
The first author thanks Robert Bryant for bringing Lawson's algebraicity question
to our attention during the surface theory workshop hosted by
MATRIX in Creswick in 2023, and MATRIX for providing the setting
for this exchange. We also thank Sergei Galkin for subsequently
drawing our attention back to the problem.

The main ideas and mathematical input are due to the authors.
OpenAI Codex and ChatGPT Astra were used to explore and test
these ideas and proof strategies, prepare drafts, and edit the
manuscript. The authors 
take full responsibility for the correctness of the mathematical
content and for the final text.
\par


\begin{thebibliography}{99}

\bibitem{Almgren}
F.~J.~Almgren, Jr.,
\emph{Some interior regularity theorems for minimal surfaces and an
extension of Bernstein's theorem},
Ann. of Math. (2) \textbf{84} (1966), no.~2, 277--292.

\bibitem{Bobenko}
A.~I.~Bobenko,
\emph{Constant mean curvature surfaces and integrable equations},
Russian Math. Surveys \textbf{46} (1991), no.~4, 1--45.

\bibitem{BobenkoTori}
A.~I.~Bobenko,
\emph{All constant mean curvature tori in $\R^3$, $S^3$, $H^3$
in terms of theta-functions},
Math. Ann. \textbf{290} (1991), no.~2, 209--245.

\bibitem{BHS}
A.~I.~Bobenko, S.~Heller, and N.~Schmitt,
\emph{Constant mean curvature surfaces based on fundamental
quadrilaterals},
Math. Phys. Anal. Geom. \textbf{24} (2021),
no.~4, article~37, \url{https://doi.org/10.1007/s11040-021-09397-z}.

\bibitem{BranderDorfmeister}
D.~Brander and J.~F.~Dorfmeister,
\emph{The Bj\"orling problem for non-minimal constant mean curvature
surfaces},
Comm. Anal. Geom. \textbf{18} (2010), no.~1, 171--194.

\bibitem{Brendle}
S.~Brendle,
\emph{Embedded minimal tori in $S^3$ and the Lawson conjecture},
Acta Math. \textbf{211} (2013), no.~2, 177--190.

\bibitem{Bryant}
R.~Bryant,
Answer to \emph{Is every minimal hypersurface in $S^n$ algebraic?},
MathOverflow, June~9, 2021,
\url{https://mathoverflow.net/questions/354974/}.

\bibitem{CastrillonMesa}
A.~J.~Castrill\'on V\'asquez and H.~Mesa Palomino,
\emph{Algebricidad de los ejemplos de Lawson},
Matem\'aticas: Ense\~nanza Universitaria \textbf{19} (2011), no.~1,
79--88.

\bibitem{CHHT}
S.~Charlton, L.~Heller, S.~Heller, and M.~Traizet,
\emph{Minimal surfaces and alternating multiple zetas},
preprint (2024), \url{https://arxiv.org/abs/2407.07130}.

\bibitem{DorfmeisterHaak}
J.~Dorfmeister and G.~Haak,
\emph{Investigation and application of the dressing action on surfaces
of constant mean curvature},
Q. J. Math. \textbf{51} (2000), no.~1, 57--73,
\url{https://doi.org/10.1093/qmathj/50.1.57}.

\bibitem{DorfmeisterKilian}
J.~Dorfmeister and M.~Kilian,
\emph{Dressing preserving the fundamental group},
Differential Geom. Appl. \textbf{23} (2005), no.~2, 176--204,
\url{https://doi.org/10.1016/j.difgeo.2005.06.001}.


\bibitem{DMPW}
J.~Dorfmeister, I.~McIntosh, F.~Pedit, and H.~Wu,
\emph{On the meromorphic potential for a harmonic surface in a
$k$-symmetric space},
Manuscripta Math. \textbf{92} (1997), 143--152.

\bibitem{DPW}
J.~Dorfmeister, F.~Pedit, and H.~Wu,
\emph{Weierstrass type representation of harmonic maps into symmetric
spaces},
Comm. Anal. Geom. \textbf{6} (1998), no.~4, 633--668.

\bibitem{Gerding}
A.~Gerding,
\emph{Spectral methods for higher genus constant mean curvature surfaces},
Ph.D. thesis, University of Massachusetts Amherst, 2011,
\url{https://doi.org/10.7275/5683939}.

\bibitem{HellerHeller}
L.~Heller and S.~Heller,
\emph{Fuchsian DPW potentials for Lawson surfaces},
Geom. Dedicata \textbf{217} (2023), article~101.

\bibitem{HHS}
L.~Heller, S.~Heller, and N.~Schmitt,
\emph{The spectral curve theory for $(k,l)$-symmetric CMC surfaces},
J. Geom. Phys. \textbf{98} (2015), 201--213.

\bibitem{Heller}
S.~Heller,
\emph{Higher genus minimal surfaces in $S^3$ and stable bundles},
J. Reine Angew. Math. \textbf{685} (2013), 105--122.

\bibitem{Hitchin}
N.~J.~Hitchin,
\emph{Harmonic maps from a 2-torus to the 3-sphere},
J. Differential Geom. \textbf{31} (1990), no.~3, 627--710.

\bibitem{Hsiang}
W.-Y.~Hsiang,
\emph{Remarks on closed minimal submanifolds in the standard
Riemannian $m$-sphere},
J. Differential Geom. \textbf{1} (1967), no.~3--4, 257--267,
\url{https://doi.org/10.4310/jdg/1214428093}.
\par

\bibitem{Lawson}
H.~B.~Lawson, Jr.,
\emph{Complete minimal surfaces in $S^3$},
Ann. of Math. (2) \textbf{92} (1970), no.~3, 335--374.

\bibitem{McIntosh}
I.~McIntosh,
\emph{Global solutions of the elliptic 2D periodic Toda lattice},
Nonlinearity \textbf{7} (1994), no.~1, 85--108.

\bibitem{MercuriOnnis}
F.~Mercuri and I.~I.~Onnis,
\emph{On the Bj\"orling problem in a three-dimensional Lie group},
Illinois J. Math. \textbf{53} (2009), no.~2, 431--440.

\bibitem{Perdomo}
O.~M.~Perdomo,
\emph{Characterization of order 3 algebraic immersed minimal surfaces
of $S^3$},
Geom. Dedicata \textbf{129} (2007), 23--34.

\bibitem{PerdomoMesa}
O.~M.~Perdomo and H.~Mesa P.,
\emph{Hipervariedades m\'inimas algebraicas en $S^n$},
Rev. Un. Mat. Argentina \textbf{47} (2006), no.~2, 67--75.

\bibitem{PinkallSterling}
U.~Pinkall and I.~Sterling,
\emph{On the classification of constant mean curvature tori},
Ann. of Math. (2) \textbf{130} (1989), no.~2, 407--451.

\bibitem{Wang}
J.~S.~Wang,
\emph{Degree 3 algebraic minimal surfaces in the 3-sphere},
Acta Math. Sci. Ser. B (Engl. Ed.) \textbf{32} (2012), no.~6,
2065--2084.

\end{thebibliography}
\end{document}